\newcommand{\m}[1]{\mathcal{#1}}
\def\slashedarrowfill@#1#2#3#4#5{%
  $\m@th\thickmuskip0mu\medmuskip\thickmuskip\thinmuskip\thickmuskip
   \relax#5#1\mkern-7mu%
   \cleaders\hbox{$#5\mkern-2mu#2\mkern-2mu$}\hfill
   \mathclap{#3}\mathclap{#2}%
   \cleaders\hbox{$#5\mkern-2mu#2\mkern-2mu$}\hfill
   \mkern-7mu#4$%
}
\def\rightslashedarrowfill@{%
  \slashedarrowfill@\relbar\relbar\mapstochar\rightarrow}
\newcommand\xslashedrightarrow[2][]{%
  \ext@arrow 0055{\rightslashedarrowfill@}{#1}{#2}}
\newcommand{\codP}{\int^b P(b,b)}
\newcommand{\CodP}{\operatorname{Cod}P}
\newcommand{\carrow}{\overset{\cdot\cdot}{\Rightarrow}}
\renewcommand{\bf}{\mathbf}
\theoremstyle{definition}
\newtheorem{definition}{Definition}[section]
\newtheorem{remark}[definition]{Remark}
\theoremstyle{plain}
\newtheorem{proposition}[definition]{Proposition}
\newtheorem{theorem}[definition]{Theorem}
\newtheorem{lemma}[definition]{Lemma}
\newtheorem{corollary}[definition]{Corollary}
\begin{document}

\title{A universal characterisation of codescent objects}

\author{Alexander S. Corner}
\date{30 August 2017}
\address{Department of Engineering and Mathematics, Sheffield Hallam University, Howard Street, Sheffield, S1 1WB}
\email{alex.corner@shu.ac.uk}
\keywords{Codescent object, Extrapseudonatural, Fubini}

\begin{abstract}
In this work we define a 2-dimensional analogue of extranatural transformation and use these to characterise codescent objects. They will be seen as universal objects amongst extrapseudonatural transformations in a similar manner in which coends are universal objects amongst extranatural transformations. Some composition lemmas concerning these transformations are introduced and a Fubini theorem for codescent objects is proven using the universal characterisation description.
\end{abstract}
\maketitle

\section{Introduction}
A slick definition of promonoidal category \cite{Day71} is that it is a pseudomonoid in the bicategory $\bf{Prof}$ of categories, profunctors, and natural transformations. This requires a way of composing two profunctors $F \colon \m{B}^{op} \times \m{A} \rightarrow \m{V}$, $G \colon \m{C}^{op} \times \m{B} \rightarrow \m{V}$, which is given by a coend
    \[
        (G \cdot F)(c,a) = \int^{b \in \m{B}} G(c,b) \times F(b,a).
    \]
Coends are often described as a coequalizer of a particular diagram involving the left and right actions of functors of the form $F \colon \m{C}^{op} \times \m{C} \rightarrow \m{V}$. An equivalent formulation is to define them as universal objects amongst extranatural transformations \cite{EK66}, a way of slightly tweaking the notion of natural transformation. In a similar way as above, they are defined between functors $F \colon \m{C}^{op} \times \m{C} \rightarrow \m{E}$, $G \colon \m{D}^{op} \times \m{D} \rightarrow \m{E}$ and act as a mediator between the left and right actions of each functor. For example, rather than requiring the usual naturality squares to commute, one of the axioms for an extranatural transformation $\alpha \colon F \carrow G$ requires the commutativity of the following diagram.
    \[
        \xy
            (0,0)*{F(b,c)}="a";
            (20,0)*+{F(b,b)}="b";
            (0,-15)*+{F(c,c)}="c";
            (20,-15)*+{G(d,d)}="d";
            {\ar^{F(b,f)} "a" ; "b"};
            {\ar^{\alpha_{bd}} "b" ; "d"};
            {\ar_{F(f,c)} "a" ; "c"};
            {\ar_{\alpha_{cd}} "c" ; "d"};
        \endxy
    \]

The motivation in defining extranatural transformations and using them to characterise codescent objects is to generalise Day's convolution structure to the setting of monoidal bicategories, shown in the author's thesis. These colimits can be seen as a $2$-dimensional generalisation of coends and are defined in reference to pseudofunctors $P \colon \m{A}^{op} \times \m{A} \rightarrow \m{B}$. A slightly weaker notion of codescent object that we consider is the bicodescent object. These objects are a type of bicolimit, having both a $1$-dimensional and $2$-dimensional universal property but requiring only existence and not uniqueness of $1$-cells in the $1$-dimensional part.

In the first section we lay out the definition of extrapseudonatural transformation along with some other basic definitions. The subsequent section describes a host of useful lemmas concerning the composition of such transformations. Following this we characterise bicodescent objects as universal objects amongst extrapseudonatural transformations before finishing with a Fubini theorem for codescent objects.

\subsection*{Notation and conventions}
We will use the following notation for pseudofunctors $F \colon \m{A} \rightarrow \m{B}$ between bicategories. The coherence cells are written
    \[
        \phi^F_{g,f} \colon Fg \cdot Ff \Rightarrow F(g \cdot f)
    \]
and
    \[
        \phi^F_a \colon Fid_a \Rightarrow id_{Fa}.
    \]
Similarly, the coherence cells for bicategories are written
    \[
        r_{f} \colon f \cdot id \Rightarrow f,
    \]
    \[
        l_{f} \colon id \cdot f \Rightarrow f,
    \]
and
    \[
        \alpha_{h,g,f} \colon (h \cdot g) \cdot f \Rightarrow h \cdot (g \cdot f).
    \]
Most coherence cells will not be explicilty written due to lack of space for cumbersome composites, instead being represented by isomorphism symbols. However with these conventions the full diagrams can easily be reproduced. Bicodescent objects will be characterised up to adjoint equivalence \cite{Gur12}.
\subsection*{Acknowledgements}
This work was supported by an EPSRC Studentship at the University of Sheffield and is adapted from parts of the author's PhD thesis. The author thanks their supervisor, Nick Gurski, for their support.

\section{Extrapseudonatural Transformations}\label{sec:cod}
We will define the notion of extrapseudonatural transformation, a weak $2$-dimensional generalisation of extranatural transformations - a similar generalisation is seen in the thesis of \cite{Law15}. We could generalise dinatural transformations in an analogous way, though we do not investigate that here. 

Before discussing extrapseudonatural transformations we will set out a definition of pseudonatural transformation for reference.


\begin{definition}\label{def:psnat}
Let $F, G \colon \m{A} \rightarrow \m{B}$ be pseudofunctors between bicategories. A pseudonatural transformation $\alpha \colon F \Rightarrow G$ consists of
    \begin{itemize}
        \item for each $a \in \m{A}$, a $1$-cell $\alpha_a \colon Fa \rightarrow Ga$ in $\m{B}$;
        \item for each $f \colon a \rightarrow a'$ in $\m{A}$, an invertible $2$-cell
            \[
                \xy
                    (0,0)*{Fa}="a";
                    (24,0)*+{Fa'}="b";
                    (0,-14)*+{Ga}="c";
                    (24,-14)*+{Ga'}="d";
                    {\ar^{Ff} "a" ; "b"};
                    {\ar^{\alpha_{a'}} "b" ; "d"};
                    {\ar_{\alpha_a} "a" ; "c"};
                    {\ar_{Gf} "c" ; "d"};
                    {\ar@{=>}^{\alpha_f} (12,-5) ; (12, -9)};
                \endxy
            \]
        in $\m{B}$.
    \end{itemize}
    These are required to satisfy the following axioms.
        \begin{itemize}
            \item PS1 Given $f \colon a \rightarrow a'$ and $g \colon a' \rightarrow a''$ in $\m{A}$, there is an equality of pasting diagrams
                \[
                    \xy
                        (0,0)*+{Fa}="a";
                        (24,0)*+{Fa'}="b";
                        (48,0)*+{Fa''}="c";
                        (0,-14)*+{Ga}="d";
                        (24,-14)*+{Ga'}="e";
                        (48,-14)*+{G''}="f";
                        {\ar^{Ff} "a" ; "b"};
                        {\ar^{Fg} "b" ; "c"};
                        {\ar_{Gf} "d" ; "e"};
                        {\ar_{Gg} "e" ; "f"};
                        {\ar_{\alpha_a} "a" ; "d"};
                        {\ar|{\alpha_{a'}} "b" ; "e"};
                        {\ar^{\alpha_{a''}} "c" ; "f"};
                        {\ar@/_3.2pc/_{G(gf)} "d" ; "f"};
                        {\ar@{=>}^{\alpha_f} (12,-5) ; (12,-9)};
                        {\ar@{=>}^{\alpha_g} (36,-5) ; (36,-9)};
                        {\ar@{=>}^{\phi^G_{g,f}} (24,-19) ; (24,-23)};
                        (70,-14)*+{Fa}="a1";
                        (94,0)*+{Fa'}="b1";
                        (118,-14)*+{Fa''}="c1";
                        (70,-28)*+{Ga}="d1";
                        (118,-28)*+{G''}="f1";
                        {\ar@/^1pc/^{Ff} "a1" ; "b1"};
                        {\ar@/^1pc/^{Fg} "b1" ; "c1"};
                        {\ar_{G(gf)} "d1" ; "f1"};
                        {\ar_{\alpha_a} "a1" ; "d1"};
                        {\ar^{\alpha_c} "c1" ; "f1"};
                        {\ar|{F(gf)} "a1" ; "c1"};
                        {\ar@{=>}^{\phi^F_{g,f}} (94,-5) ; (94,-9)};
                        {\ar@{=>}^{\alpha_{gf}} (94,-20) ; (94,-24)};
                        {\ar@{=} (57,-14) ; (61,-14)};
                    \endxy
                \]
            in $\m{B}$.

            \item PS2 Given $f, f' \colon a \rightarrow a'$ and $\beta \colon f \Rightarrow f'$ in $\m{A}$, there is an equality of pasting diagrams
                \[
                    \xy
                        (0,0)*{Fa}="a";
                        (24,0)*+{Fa'}="b";
                        (0,-14)*+{Ga}="c";
                        (24,-14)*+{Ga'}="d";
                        {\ar|{Ff'} "a" ; "b"};
                        {\ar^{\alpha_{a'}} "b" ; "d"};
                        {\ar_{\alpha_a} "a" ; "c"};
                        {\ar_{Gf'} "c" ; "d"};
                        {\ar@/^2pc/^{Ff} "a" ; "b"};
                        {\ar@{=>}^{F\beta} (12,6.5) ; (12,2.5)};
                        {\ar@{=>}^{\alpha_{f'}} (12,-5) ; (12,-9)};
                        (46,0)*{Fa}="a1";
                        (70,0)*+{Fa'}="b1";
                        (46,-14)*+{Ga}="c1";
                        (70,-14)*+{Ga'}="d1";
                        {\ar^{Ff} "a1" ; "b1"};
                        {\ar^{\alpha_{a'}} "b1" ; "d1"};
                        {\ar_{\alpha_a} "a1" ; "c1"};
                        {\ar|{Gf} "c1" ; "d1"};
                        {\ar@/_2pc/_{Gf'} "c1" ; "d1"};
                        {\ar@{=>}^{\alpha_f} (58,-5) ; (58,-9)};
                        {\ar@{=>}^{G\beta} (58,-16.5) ; (58,-20.5)};
                        {\ar@{=} (34,-7) ; (38,-7)};
                    \endxy
                \]
            in $\m{B}$.

            \item PS3 Given $a \in \m{A}$, there is an equality of pasting diagrams
                \[
                    \xy
                        (0,0)*{Fa}="a";
                        (24,0)*+{Fa}="b";
                        (0,-14)*+{Ga}="c";
                        (24,-14)*+{Ga}="d";
                        {\ar|{id_{Fa}} "a" ; "b"};
                        {\ar^{\alpha_{a}} "b" ; "d"};
                        {\ar_{\alpha_a} "a" ; "c"};
                        {\ar_{id_{Ga}} "c" ; "d"};
                        {\ar@/^2pc/^{Fid_a} "a" ; "b"};
                        {\ar|{\alpha_a} "a" ; "d"};
                        {\ar@{=>}^{\phi^F_a} (12,6.5) ; (12,2.5)};
                        {\ar@{=>}^{r_{\alpha_a}} (17,-2) ; (15,-6)};
                        {\ar@{=>}_{l^{-1}_{\alpha_a}} (9,-8) ; (7,-12)};
                        (46,0)*{Fa}="a1";
                        (70,0)*+{Fa}="b1";
                        (46,-14)*+{Ga}="c1";
                        (70,-14)*+{Ga}="d1";
                        {\ar^{Fid_a} "a1" ; "b1"};
                        {\ar^{\alpha_{a}} "b1" ; "d1"};
                        {\ar_{\alpha_a} "a1" ; "c1"};
                        {\ar|{Gid_a} "c1" ; "d1"};
                        {\ar@/_2pc/_{id_{Ga}} "c1" ; "d1"};
                        {\ar@{=>}^{\alpha_{id_a}} (58,-5) ; (58,-9)};
                        {\ar@{=>}^{\phi^G_a} (58,-16.5) ; (58,-20.5)};
                        {\ar@{=} (34,-7) ; (38,-7)};
                    \endxy
                \]
            in $\m{B}$.
        \end{itemize}
\end{definition}

Extranatural transformations were first defined by \cite{EK66} for use in their subsequent article on closed categories \cite{EK66b}.

\begin{definition}\label{def:epnat}
Let $P \colon \m{A} \times \m{B}^{op} \times \m{B} \rightarrow \m{D}$ and $Q \colon \m{A} \times \m{C}^{op} \times \m{C} \rightarrow \m{D}$ be pseudofunctors. An \emph{extrapseudonatural transformation} $\beta \colon P \carrow Q$ consists of
    \begin{itemize}
        \item for each $b \in \m{B}$, $c \in \m{C}$, a pseudonatural transformation $\beta_{-bc} \colon P(-,b,b) \Rightarrow Q(-,c,c)$;
        \item for each $g \colon b \rightarrow b'$ in $\m{B}$, an invertible $2$-cell
            \[
                \xy
                    (0,0)*+{P(a,b',b)}="a";
                    (24,0)*+{P(a,b,b)}="b";
                    (0,-14)*+{P(a,b',b')}="c";
                    (24,-14)*+{Q(a,c,c)}="d";
                    {\ar^{P_{1g1}} "a" ; "b"};
                    {\ar_{P_{11g}} "a" ; "c"};
                    {\ar^{\beta_{abc}} "b" ; "d"};
                    {\ar_{\beta_{ab'c}} "c" ; "d"};
                    {\ar@{=>}^{\beta_{agc}} (12,-5) ; (12,-9)};
                \endxy
            \]
        in $\m{D}$;
        \item for each $h \colon c \rightarrow c'$ in $\m{C}$, an invertible $2$-cell
            \[
                \xy
                    (0,0)*+{P(a,b,b)}="a";
                    (24,0)*+{Q(a,c',c')}="b";
                    (0,-14)*+{Q(a,c,c)}="c";
                    (24,-14)*+{Q(a,c,c')}="d";
                    {\ar^{\beta_{abc'}} "a" ; "b"};
                    {\ar_{\beta_{abc}} "a" ; "c"};
                    {\ar^{Q_{1h1}} "b" ; "d"};
                    {\ar_{Q_{11h}} "c" ; "d"};
                    {\ar@{=>}^{\beta_{abh}} (12,-5) ; (12,-9)};
                \endxy
            \]
            in $\m{D}$.
    \end{itemize}
These are required to satisfy the following axioms.
    \begin{itemize}
        \item EP1 Given $f \colon b \rightarrow b'$ and $g \colon b' \rightarrow b''$ in $\m{B}$, there is an equality of pasting diagrams
            \[
                \xy
                    (0,0)*+{P_{ab''b}}="a";
                    (15,-7)*+{P_{ab'b}}="b";
                    (15,-35)*+{P_{abb}}="c";
                    (-15,-7)*+{P_{ab''b'}}="d";
                    (-15,-35)*+{P_{ab''b''}}="e";
                    (0,-42)*+{Q_{acc}}="f";
                    {\ar^{P_{1f1}} "a" ; "b"};
                    {\ar^{P_{1g1}} "b" ; "c"};
                    {\ar^{\beta_{abc}} "c" ; "f"};
                    {\ar_{P_{11g}} "a" ; "d"};
                    {\ar_{P_{11f}} "d" ; "e"};
                    {\ar_{\beta_{ab''c}} "e" ; "f"};
                    {\ar|{P_{11(gf)}} "a" ; "e"};
                    {\ar|{P_{1(gf)1}} "a" ; "c"};
                    %
                    (9,-10)*+{\cong};
                    {\ar@{=>}^{\beta_{a(gf)c}} (0,-24) ; (-4,-27)};
                    (-9,-10)*+{\cong};
                    (55,0)*+{P_{ab''b}}="1";
                    (70,-7)*+{P_{ab'b}}="2";
                    (70,-35)*+{P_{abb}}="3";
                    (40,-7)*+{P_{ab''b'}}="4";
                    (40,-35)*+{P_{ab''b''}}="5";
                    (55,-42)*+{Q_{acc}}="6";
                    (55,-14)*+{P_{ab'b'}}="7";
                    {\ar^{P_{1f1}} "1" ; "2"};
                    {\ar^{P_{1g1}} "2" ; "3"};
                    {\ar^{\beta_{abc}} "3" ; "6"};
                    {\ar_{P_{11g}} "1" ; "4"};
                    {\ar_{P_{11f}} "4" ; "5"};
                    {\ar_{\beta_{ab''c}} "5" ; "6"};
                    {\ar^{P_{11g}} "2" ; "7"};
                    {\ar_{P_{1f1}} "4" ; "7"};
                    {\ar|{\beta_{ab'c}} "7" ; "6"};
                    (55,-7)*+{\cong};
                    {\ar@{=>}^{\beta_{agc}} (62.5,-22) ; (58.5,-25)};
                    {\ar@{=>}^{\beta_{afc}} (47.5,-22) ; (43.5,-25)};
                    {\ar@{=} (25.5,-21) ; (29.5,-21)};
                \endxy
            \]
            for all $a \in \m{A}$, $c \in \m{C}$.
            \item EP2 Given $h \colon c \rightarrow c'$ and $i \colon c' \rightarrow c'$ in $\m{C}$, there is an equality of pasting diagrams
            \[
                \xy
                    (0,0)*+{P_{abb}}="a";
                    (15,-7)*+{Q_{ac''c''}}="b";
                    (15,-35)*+{Q_{ac'c''}}="c";
                    (-15,-7)*+{Q_{acc}}="d";
                    (-15,-35)*+{Q_{acc'}}="e";
                    (0,-42)*+{Q_{acc''}}="f";
                    {\ar^{\beta_{abc''}} "a" ; "b"};
                    {\ar^{Q_{1i1}} "b" ; "c"};
                    {\ar^{Q_{1h1}} "c" ; "f"};
                    {\ar_{\beta_{abc}} "a" ; "d"};
                    {\ar_{Q_{11h}} "d" ; "e"};
                    {\ar_{Q_{11i}} "e" ; "f"};
                    {\ar|{Q_{1(ih)1}} "b" ; "f"};
                    {\ar|{Q_{11(ih)}} "d" ; "f"};
                    %
                    (9,-32)*+{\cong};
                    {\ar@{=>}^{\beta_{ab(ih)}}  (0,-12) ; (-4,-15)};
                    (-9,-32)*+{\cong};
                    (55,0)*+{P_{abb}}="1";
                    (70,-7)*+{Q_{ac''c''}}="2";
                    (70,-35)*+{Q_{ac'c''}}="3";
                    (40,-7)*+{Q_{acc}}="4";
                    (40,-35)*+{Q_{acc'}}="5";
                    (55,-42)*+{Q_{acc''}}="6";
                    (55,-28)*+{Q_{ac'c'}}="7";
                    {\ar^{\beta_{abc''}} "1" ; "2"};
                    {\ar^{Q_{1i1}} "2" ; "3"};
                    {\ar^{Q_{1h1}} "3" ; "6"};
                    {\ar_{\beta_{abc}} "1" ; "4"};
                    {\ar_{Q_{11h}} "4" ; "5"};
                    {\ar|{\beta_{abc'}} "1" ; "7"};
                    {\ar_{Q_{11i}} "5" ; "6"};
                    {\ar^{Q_{11i}} "7" ; "3"};
                    {\ar_{Q_{1h1}} "7" ; "5"};
                    (55,-35)*+{\cong};
                    {\ar@{=>}^{\beta_{abi}} (62.5,-17) ; (58.5,-20)};
                    {\ar@{=>}^{\beta_{abh}} (47.5,-17) ; (43.5,-20)};
                    {\ar@{=} (25.5,-21) ; (29.5,-21)};
                \endxy
            \]
            for all $a \in \m{A}$, $b \in \m{B}$.
            \item EP3 For each $f \colon a \rightarrow a'$ in $\m{A}$ and $g \colon b \rightarrow b'$ in $\m{B}$ there is an equality of pasting diagrams
                    \[
                        \xy
                            (55,0)*+{P_{ab'b}}="a";
                            (70,-7)*+{P_{a'b'b}}="b";
                            (70,-35)*+{P_{a'bb}}="c";
                            (55,-42)*+{Q_{a'cc}}="d";
                            (40,-7)*+{P_{ab'b'}}="e";
                            (40,-35)*+{Q_{acc}}="f";
                            (55,-14)*+{P_{a'b'b'}}="g";
                            {\ar^{P_{f11}} "a" ; "b"};
                            {\ar^{P_{11g}} "b" ; "c"};
                            {\ar^{\beta_{a'bc}} "c" ; "d"};
                            {\ar_{P_{1g1}} "a" ; "e"};
                            {\ar_{\beta_{ab'c}} "e" ; "f"};
                            {\ar_{Q_{f11}} "f" ; "d"};
                            {\ar^{P_{1g1}} "b" ; "g"};
                            {\ar|{\beta_{a'b'c}} "g" ; "d"};
                            {\ar_{P_{f11}} "e" ; "g"};
                            (55,-7)*+{\cong};
                            {\ar@{=>}^{\beta_{a'gc}} (62.5,-22) ; (58.5,-25)};
                            {\ar@{=>}^{\beta_{fbc}} (47.5,-22) ; (43.5,-25)};
                            (0,0)*+{P_{ab'b}}="1";
                            (15,-7)*+{P_{a'b'b}}="2";
                            (15,-35)*+{P_{a'bb}}="3";
                            (0,-42)*+{Q_{a'cc}}="4";
                            (-15,-7)*+{P_{ab'b'}}="5";
                            (-15,-35)*+{Q_{acc}}="6";
                            (0,-28)*+{P_{abb}}="7";
                            {\ar^{P_{f11}} "1" ; "2"};
                            {\ar^{P_{11g}} "2" ; "3"};
                            {\ar^{\beta_{a'bc}} "3" ; "4"};
                            {\ar_{P_{1g1}} "1" ; "5"};
                            {\ar_{\beta_{ab'c}} "5" ; "6"};
                            {\ar_{Q_{f11}} "6" ; "4"};
                            {\ar|{P_{11g}} "1" ; "7"};
                            {\ar^{P_{f11}} "7" ; "3"};
                            {\ar_{\beta_{abc}} "7" ; "6"};
                            (7.5,-18.5)*+{\cong};
                            {\ar@{=>}^{\beta_{agc}} (-7.5,-17) ; (-11.5,-20)};
                            {\ar@{=>}^{\beta_{fb'c}} (1,-33.5) ; (-3,-36.5)};
                            {\ar@{=} (25.5,-21) ; (29.5,-21)};
                        \endxy
                    \]
                for all $c \in \m{C}$.
            \item EP4 For each $f \colon a \rightarrow a'$ in $\m{A}$ and $h \colon c \rightarrow c'$ in $\m{C}$ there is an equality of pasting diagrams
                    \[
                        \xy
                            (0,0)*+{P_{abb}}="a";
                            (15,-7)*+{P_{a'bb}}="b";
                            (15,-35)*+{Q_{a'c'c'}}="c";
                            (0,-42)*+{Q_{a'c'c}}="d";
                            (-15,-7)*+{Q_{acc}}="e";
                            (-15,-35)*+{Q_{acc'}}="f";
                            (0,-14)*+{Q_{a'cc}}="g";
                            {\ar^{P_{f11}} "a" ; "b"};
                            {\ar^{\beta_{a'bc'}} "b" ; "c"};
                            {\ar^{Q_{1h1}} "c" ; "d"};
                            {\ar_{\beta_{abc}} "a" ; "e"};
                            {\ar_{Q_{11h}} "e" ; "f"};
                            {\ar_{Q_{f11}} "f" ; "d"};
                            {\ar^{\beta_{a'bc}} "b" ; "g"};
                            {\ar|{Q_{11h}} "g" ; "d"};
                            {\ar_{Q_{f11}} "e" ; "g"};
                            (55,-35)*+{\cong};
                            {\ar@{=>}^{\beta_{fbc'}} (62.5,-17) ; (58.5,-20)};
                            {\ar@{=>}^{\beta_{abh}} (47.5,-17) ; (43.5,-20)};
                            (55,0)*+{P_{abb}}="1";
                            (70,-7)*+{P_{a'bb}}="2";
                            (70,-35)*+{Q_{a'c'c'}}="3";
                            (55,-42)*+{Q_{a'c'c}}="4";
                            (40,-7)*+{Q_{acc}}="5";
                            (40,-35)*+{Q_{acc'}}="6";
                            (55,-28)*+{Q_{ac'c'}}="7";
                            {\ar^{P_{f11}} "1" ; "2"};
                            {\ar^{\beta_{a'bc'}} "2" ; "3"};
                            {\ar^{Q_{1h1}} "3" ; "4"};
                            {\ar_{\beta_{abc}} "1" ; "5"};
                            {\ar_{Q_{11h}} "5" ; "6"};
                            {\ar_{Q_{f11}} "6" ; "4"};
                            {\ar|{\beta_{abc}} "1" ; "7"};
                            {\ar^{Q_{f11}} "7" ; "3"};
                            {\ar_{Q_{1h1}} "7" ; "6"};
                            (-7.5,-23.5)*+{\cong};
                            {\ar@{=>}^{\beta_{fbc}} (1,-4.5) ; (-3,-8.5)};
                            {\ar@{=>}^{\beta_{a'bh}} (7.5,-22) ; (3.5,-25)};
                            {\ar@{=} (25.5,-21) ; (29.5,-21)};
                        \endxy
                    \]
                for all $b \in \m{B}$.
            \item EP5 For each $a \in \m{A}$, $b \in \m{B}$, and $c \in \m{C}$,
                \[
                    \beta_{a1_bc} = id_{\beta_{abc} \cdot P_{abb}}, \beta_{ab1_c} = id_{Q_{acc} \cdot \beta_{abc}}.
                \]
            \item EP6 Given $g, g' \colon b \rightarrow b'$ and $\gamma \colon g \Rightarrow g'$ in $\m{B}$, there is an equality of pasting diagrams
            \[
                \xy
                    (0,0)*+{P_{ab'b}}="a";
                    (30,0)*+{P_{abb}}="b";
                    (0,-18)*+{P_{ab'b'}}="c";
                    (30,-18)*+{Q_{acc}}="d";
                    {\ar_{P_{ag'b}} "a" ; "b"};
                    {\ar_{P_{ab'g'}} "a" ; "c"};
                    {\ar^{\beta_{abc}} "b" ; "d"};
                    {\ar_{\beta_{ab'c}} "c" ; "d"};
                    {\ar@/^2pc/^{P_{agb}} "a" ; "b"};
                    {\ar@{=>}^{\beta_{ag'c}} (12,-7) ; (12,-11)};
                    {\ar@{=>}^{P_{a\gamma b}} (12, 6) ; (12, 2)};
                    (70,0)*+{P_{ab'b}}="1";
                    (100,0)*+{P_{abb}}="2";
                    (70,-18)*+{P_{ab'b'}}="3";
                    (100,-18)*+{Q_{acc}}="4";
                    {\ar^{P_{agb}} "1" ; "2"};
                    {\ar^{P_{ab'g}} "1" ; "3"};
                    {\ar^{\beta_{abc}} "2" ; "4"};
                    {\ar_{\beta_{ab'c}} "3" ; "4"};
                    {\ar@/_2pc/_{P_{ab'g'}} "1" ; "3"};
                    {\ar@{=>}^{\beta_{agc}} (82,-7) ; (82,-11)};
                    {\ar@{=>}_{P_{ab'\gamma}} (68,-10) ; (64,-10)};
                    {\ar@{=} (45,-9) ; (48,-9)};
                \endxy
            \]
        for all $a \in \m{A}$, $c \in \m{C}$.
        \item EP7 Given $h, h' \colon c \rightarrow c'$ and $\delta \colon h \Rightarrow h'$ in $\m{C}$, there is an equality of pasting diagrams
                     \[
                \xy
                    (0,0)*+{P_{abb}}="a";
                    (30,0)*+{Q_{ac'c'}}="b";
                    (0,-18)*+{Q_{acc}}="c";
                    (30,-18)*+{Q_{acc'}}="d";
                    {\ar^{\beta_{abc'}} "a" ; "b"};
                    {\ar_{\beta_{abc}} "a" ; "c"};
                    {\ar^{Q_{ahc'}} "b" ; "d"};
                    {\ar^{Q_{ach}} "c" ; "d"};
                    {\ar@/_2pc/_{Q_{ah'c}} "c" ; "d"};
                    {\ar@{=>}^{\beta_{abh}} (12,-7) ; (12,-11)};
                    {\ar@{=>}^{Q_{ac\delta}} (12, -20) ; (12, -24)};
                    (60,0)*+{P_{abb}}="1";
                    (90,0)*+{Q_{ac'c'}}="2";
                    (60,-18)*+{Q_{acc}}="3";
                    (90,-18)*+{Q_{acc'}}="4";
                    {\ar^{\beta_{abc'}} "1" ; "2"};
                    {\ar_{\beta_{abc}} "1" ; "3"};
                    {\ar_{Q_{ah'c'}} "2" ; "4"};
                    {\ar_{Q_{ach'}} "3" ; "4"};
                    {\ar@/^2pc/^{Qahc'} "2" ; "4"};
                    {\ar@{=>}^{\beta_{abh'}} (72,-7) ; (72,-11)};
                    {\ar@{=>}_{Q_{a\delta c'}} (96,-9) ; (92,-9)};
                    {\ar@{=} (45,-9) ; (48,-9)};
                \endxy
            \]
        for all $a \in \m{A}$, $b \in \m{B}$.
    \end{itemize}
\end{definition}

\begin{remark}\label{constant}
There are a large number of axioms in the definition of an extrapseudonatural transformation. In practice we will find that only a subset of these need to be checked. For example, if $Q$ is a constant pseudofunctor in the definition above then each $2$-cell $\beta_{abh}$ is in fact an identity $id_{Q} \cdot id_{\beta_{ab\cdot}}$. This then means that EP2, EP4, EP7, and the second part of EP5 all hold automatically. In many cases from this point onwards we deal with extrapseudonatural transformations into or out of constant pseudofunctors on an object of a bicategory. We denote the constant pseudofunctor on an object $H$ either by $\Delta_H \colon \bf{1}^{op} \times \bf{1} \rightarrow \m{C}$ or, most commonly, simply by $H$.
\end{remark}

In the one dimensional case a transformation is extranatural in the pair $(a,b)$ if and only if it is extranatural in $a$ and $b$ separately. For extrapseudonatural transformations, however, this is no longer true. Being extrapseudonatural in $a$ and $b$ separately only implies extrapseudonaturality in $(a,b)$ under the conditions of the following lemma, though the usual converse still holds.

\begin{lemma}\label{compatibility}
Let $P \colon \m{A}^{op} \times \m{B}^{op} \times \m{A} \times \m{B} \rightarrow \m{C}$ be a pseudofunctor and let $X \in \m{C}$. Suppose, respectively, that for fixed $a \in \m{A}$ and for fixed $b \in \m{B}$, $\gamma_{a-} \colon P(a,-,a,-) \overset{\cdot\cdot}{\Rightarrow} X$ and $\gamma_{-b} \colon P(-,b,-,b) \overset{\cdot\cdot}{\Rightarrow} X$ are extrapseudonatural transformations such that $(\gamma_{a-})_b = (\gamma_{-b})_a$. If there is an equality of pasting diagrams
    \[
        \xy
            (0,0)*+{P_{a'b'ab}}="a";
            (25,-10)*+{P_{a'bab}}="b";
            (50,-10)*+{P_{abab}}="c";
            (10,-25)*+{P_{a'b'a'b}}="d";
            (10,-50)*+{P_{a'b'a'b'}}="e";
            (50,-50)*+{X}="f";
            (35,-35)*+{P_{a'ba'b}}="g";
            {\ar@/^1.5pc/^{P_{fgab}} "a" ; "c"};
            {\ar|{P_{a'gab}} "a" ; "b"};
            {\ar|{P_{fbab}} "b" ; "c"};
            {\ar@/_2pc/_{P_{a'b'fg}} "a" ; "e"};
            {\ar|{P_{a'b'fb}} "a" ; "d"};
            {\ar|{P_{a'b'a'g}} "d" ; "e"};
            {\ar^{\gamma_{ab}} "c" ; "f"};
            {\ar_{\gamma_{a'b'}} "e" ; "f"};
            {\ar|{\gamma_{a'b}} "g" ; "f"};
            {\ar|{P_{a'ga'b}} "d" ; "g"};
            {\ar|{P_{a'bfb}} "b" ; "g"};
            (75,0)*+{P_{a'b'ab}}="a";
            (100,-10)*+{P_{ab'ab}}="b";
            (125,-10)*+{P_{abab}}="c";
            (85,-25)*+{P_{a'b'ab'}}="d";
            (85,-50)*+{P_{a'b'a'b'}}="e";
            (125,-50)*+{X}="f";
            (110,-35)*+{P_{ab'ab'}}="g";
            {\ar@/^1.5pc/^{P_{fgab}} "a" ; "c"};
            {\ar|{P_{fb'ab}} "a" ; "b"};
            {\ar|{P_{agab}} "b" ; "c"};
            {\ar@/_2pc/_{P_{a'b'fg}} "a" ; "e"};
            {\ar|{P_{a'b'ag}} "a" ; "d"};
            {\ar|{P_{a'b'fb'}} "d" ; "e"};
            {\ar^{\gamma_{ab}} "c" ; "f"};
            {\ar_{\gamma_{a'b'}} "e" ; "f"};
            {\ar|{\gamma_{ab'}} "g" ; "f"};
            {\ar|{P_{fb'ab'}} "d" ; "g"};
            {\ar|{P_{ab'ag}} "b" ; "g"};
            {\ar@{=>}^{\gamma_{fb}} (40,-20) ; (40,-24)};
            {\ar@{=>}^{\gamma_{ag}} (115,-20) ; (115,-24)};
            {\ar@{=>}^{\gamma_{a'g}} (25,-40) ; (25,-44)};
            {\ar@{=>}^{\gamma_{fb'}} (100,-40) ; (100,-44)};
            (17.5,-17.5)*+{\cong};
            (92.5,-17.5)*+{\cong};
            (25,-5)*+{\cong};
            (100,-5)*+{\cong};
            (0,-25)*+{\cong};
            (75,-25)*+{\cong};
            (62.5,-15)*+{=};
        \endxy
    \]
then these $2$-cells constitute an extrapseudonatural transformation $\gamma \colon P \overset{\cdot\cdot}{\Rightarrow} X$.
\end{lemma}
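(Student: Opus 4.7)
The plan is to define the required $2$-cell data $\gamma_{(f,g)}$ for each arrow $(f,g) \colon (a',b') \to (a,b)$ in $\m{A} \times \m{B}$ and to verify axioms EP1, EP5, and EP6 for $\gamma$; axioms EP2, EP4, EP7, and the second half of EP5 are automatic by Remark~\ref{constant} since the target is the constant pseudofunctor $X$, and EP3 together with EP4 are vacuous because there is no separate regular domain. I would define $\gamma_{(f,g)}$ to be the left-hand pasting displayed in the compatibility hypothesis, so that the contravariant leg factorises as contra-$f$ after contra-$g$ and the covariant leg factorises as co-$g$ after co-$f$, with $\gamma_{fb}$ (supplied by $\gamma_{-b}$) and $\gamma_{a'g}$ (supplied by $\gamma_{a'-}$) filling the two resulting hexagons around the intermediate object $P_{a',b,a',b}$ with tail $\gamma_{a'b}$. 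The hypothesis then guarantees this $2$-cell agrees with the alternative decomposition through $P_{a,b',a,b'}$ with tail $\gamma_{ab'}$, so $\gamma_{(f,g)}$ is unambiguous.

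Axiom EP5 is immediate: evaluating the defining pasting at $(1_a, 1_b)$, the first part of EP5 for both $\gamma_{a-}$ and $\gamma_{-b}$ forces each hexagon to collapse to an identity, and the surrounding pseudofunctoriality constraints of $P$ assemble into the expected identity $2$-cell on $\gamma_{ab} \cdot P_{a,b,a,b}$. Axiom EP6, applied to a $2$-cell $(\alpha,\beta) \colon (f,g) \Rightarrow (f',g')$, reduces by whiskering the defining hexagons to a separate application of EP6 for $\gamma_{-b}$ (on $\alpha$) together with EP6 for $\gamma_{a'-}$ (on $\beta$), combined with the naturality of $P$'s coherence constraints.

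The substantive verification is EP1 for $\gamma$. Given composable arrows $(f_1,g_1)$ and $(f_2,g_2)$ in $\m{A} \times \m{B}$, one expands both sides of EP1 using the defining hexagons and then invokes the compatibility hypothesis repeatedly to re-route intermediate $2$-cells so that the two $\gamma_{-b}$-instances sit adjacent to one another and the two $\gamma_{a-}$-instances likewise. At that point EP1 for $\gamma_{-b}$ collapses the contravariant-and-covariant composite $f_2 \cdot f_1$ and EP1 for $\gamma_{a-}$ collapses $g_2 \cdot g_1$, after which one invokes the compatibility hypothesis once more (together with $P$'s pseudofunctoriality) to recover the defining hexagons for $\gamma_{(f_2 f_1, g_2 g_1)}$.

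The main obstacle will be the size of the pasting diagrams in the EP1 verification and the careful bookkeeping of the $P$-coherence $2$-cells; the core strategy, however, is clean \emph{normalise by compatibility, collapse by EP1 for each component, then re-normalise}. This is the $2$-dimensional analogue of the one-dimensional fact that separate extranaturality in $a$ and in $b$ implies joint extranaturality in $(a,b)$, with the compatibility hypothesis supplying explicitly the coherence datum that is automatic in dimension one.
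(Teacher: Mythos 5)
Your proposal is correct and follows essentially the same route as the paper, whose own proof is only a one-sentence assertion that the axioms follow from the corresponding axioms for $\gamma_{a-}$ and $\gamma_{-b}$, naturality of the coherence cells of $P$ and of $\m{A}$, $\m{B}$, and the displayed compatibility equality (needed precisely in the EP1-type verification you identify as the substantive one). In fact you supply more detail than the paper does --- identifying which axioms are automatic for a constant target, taking the left-hand pasting as the definition of $\gamma_{(f,g)}$ with the hypothesis guaranteeing well-definedness, and sketching the normalise--collapse--renormalise argument for EP1 --- so there is nothing to object to.
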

\begin{proof}
All of the axioms to check extrapseudonaturality for the above $2$-cells are satisfied as a result of the corresponding axioms for the individual transformations, naturality of coherence cells for $P$ and of those in $\m{A}$ and $\m{B}$, as well as the equality of $2$-cells stated above which is needed for axioms EP1 and EP2.
\end{proof}

\begin{remark}
The statement labelled above as a proof might seem insufficient to be described as such. The equality of pasting diagrams above is essentially the single obstruction to extrapseudonaturality in each variable implying extrapseudonaturality in the pair. The manipulation of the definitions is relatively simple, though fairly cumbersome, and one finds that, in trying to show extrapseudonaturality in the pair, the above equality is the only thing standing in the way of the implication.
\end{remark}
\begin{lemma}\label{fixed}
Let $P \colon \m{A}^{op} \times \m{B}^{op} \times \m{A} \times \m{B} \rightarrow \m{C}$ be a pseudofunctor and let $X \in \m{C}$. Suppose that $\gamma \colon P \carrow X$ is an extrapseudonatural transformation. If $a \in \m{A}$ is fixed then there is an extrapseudonatural transformation $\gamma_{a-} \colon P(a,-a,-) \carrow X$. Similarly, if $b \in \m{B}$ is fixed then there is an extrapseudonatural transformation $\gamma_{-b} \colon P(-,b,-,b) \carrow X$.
\end{lemma}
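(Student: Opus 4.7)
The plan is to construct $\gamma_{a-}$ by simply restricting the data of $\gamma$ to the slice where the $\m{A}$-variable is fixed at $a$, and then to derive each axiom for $\gamma_{a-}$ as a specialisation of an axiom for $\gamma$. Concretely, for each $b \in \m{B}$ take the $1$-cell component to be $(\gamma_{a-})_b := \gamma_{ab} \colon P(a,b,a,b) \to X$, and for each $g \colon b \to b'$ in $\m{B}$ take the $2$-cell component $(\gamma_{a-})_g$ to be the component $\gamma_{ag}$ already present in the data of $\gamma$. The construction for fixed $b \in \m{B}$ is entirely symmetric, swapping the roles of $\m{A}$ and $\m{B}$.

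Because the target is the constant pseudofunctor $X$, Remark \ref{constant} ensures that the axioms EP2, EP4, EP7 and the second clause of EP5 hold automatically for $\gamma_{a-}$. Moreover, any axiom involving a morphism in the now fixed $\m{A}$-variable (in particular EP3) is vacuous, so the content reduces to verifying EP1, the first clause of EP5, and EP6 in the single variable $b \in \m{B}$.

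Each of these three axioms for $\gamma_{a-}$ is obtained from the corresponding axiom of $\gamma$ by setting every $\m{A}$-morphism appearing there to $1_a$. The identity clause of EP5 applied to $\gamma$, together with the pseudofunctor coherence of $P$ at identity morphisms, collapses the $\m{A}$-variable $2$-cells to identities, and what remains of the resulting pasting equality is precisely the axiom required for $\gamma_{a-}$. In the case of EP1, one also uses pseudofunctoriality of $P$ to identify $P(1_a \cdot 1_a, g' \cdot g)$ with the composite $P(1_a,g') \cdot P(1_a, g)$ up to coherence.

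The main obstacle is purely notational bookkeeping: one must track the pseudofunctor coherence cells of $P$ at identity morphisms of $\m{A}$ and verify that they cancel against the identity $2$-cells produced by EP5 in exactly the way needed. No conceptual difficulty arises, but the pasting diagrams of $\gamma$ (of the kind displayed in Lemma \ref{compatibility}) have to be carefully restricted to the fixed-$a$ slice before the cancellations can be read off.
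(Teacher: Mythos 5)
Your overall strategy is the same as the paper's: restrict the data of $\gamma$ to the fixed-$a$ slice, use the constancy of the codomain to dispose of EP2, EP4, EP7 and the second clause of EP5, and deduce the remaining axioms from the corresponding axioms for $\gamma$. However, there is a genuine gap in your treatment of EP1, which is the one non-trivial step. Applying EP1 for $\gamma$ to the pair of morphisms $(1_a,g)$ and $(1_a,h)$ produces the component $\gamma_{1_a\cdot 1_a,\,hg}$, indexed by the composite $1_a\cdot 1_a$ in $\m{A}$, whereas the axiom you must verify for $\gamma_{a-}$ involves the component $\gamma_{1_a,\,hg}$. Pseudofunctoriality of $P$, which is the only additional ingredient you invoke, only adjusts the $1$-cell legs $P(1_a\cdot 1_a,\,-)$ of the pasting diagram; it cannot reindex the $2$-cell component itself, since $1_a\cdot 1_a$ and $1_a$ are merely isomorphic, not equal. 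The missing step is an application of EP6 for $\gamma$ to the unitor $2$-cell $1_a\cdot 1_a\Rightarrow 1_a$ (paired with the identity on $hg$), which is exactly how the paper converts the diagram featuring $\gamma_{1_a 1_a,\,hg}$ into the one featuring $\gamma_{1_a,\,hg}$.

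A secondary imprecision: you claim that the first clause of EP5 for $\gamma$ "collapses the $\m{A}$-variable $2$-cells to identities" in the course of verifying EP1. It does not; EP5 only asserts $\gamma_{1_a,1_b}=\mathrm{id}$, and the components appearing in the EP1 verification are $\gamma_{1_a,g}$, $\gamma_{1_a,h}$ and $\gamma_{1_a,hg}$, none of which are identities. EP5 for $\gamma$ is needed only to verify EP5 for $\gamma_{a-}$. Likewise, EP3 for $\gamma_{a-}$ is not vacuous: the fixed variable still carries the identity morphism, and the axiom at that identity is an honest (if easy) condition that the paper derives from the pseudofunctor axioms for $P$ rather than dismissing outright.
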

\begin{proof}
Since the pseudofunctor in the codomain is constant at the object $X$ we need only check that EP1, EP3, EP6, and the first part of EP5 hold. The axiom EP3 is a consequence of the pseudofunctor axioms for $P$, the first part of EP5 holds for $\gamma_{a-}$ via the axiom EP5 for $\gamma$, while EP6 holds for $\gamma_{a-}$ via EP6 for $\gamma$. To show that EP1 holds for $\gamma_{a-}$ is more involved but still fairly simple, resulting from instances of EP1 and EP6 for $\gamma$. We must show that two diagrams are equal, one featuring $\gamma_{1_a,hg}$ and the other featuring both $\gamma_{1_{a},g}$ and $\gamma_{1_{a},h}$. By EP1 for $\gamma$ the diagram featuring $\gamma_{1_{a},g}$ and $\gamma_{1_{a},h}$ is equal to one featuring $\gamma_{1_a 1_a,hg}$. At this point we use EP6 to show that this diagram is equal to the diagram featuring $\gamma_{1_a,hg}$. Hence $\gamma_{a-}$ is an extrapseudonatural transformation. A similar argument holds for $\gamma_{-b}$.
\end{proof}

\section{Composition Lemmas}
The article of \cite{EK66b} in which extranatural transformations are defined also investigates the ways in which they can be composed. We now present generalisations of the simplest forms of these arguments for extrapseudonatural transformations.

We can picture the first of the composition lemmas in a string diagram format. Pseudoaturality is presented as a straight line between bicategories while extrapseudonaturality is a cup or cap between a bicategory and its opposite. In this lemma we have a pseudonatural transformation $\beta \colon F \Rightarrow G$ where $F,G \colon \m{A}^{op} \times \m{A} \rightarrow \m{C}$, along with an extrapseudonatural transformation $\gamma \colon G \carrow H$, for some object $H \in \m{C}$. We define a composite which results in an extrapseudonatural transformation $F \carrow H$. Graphically this looks like the following diagram, sometimes referred to as `stalactites' due to the shape.

\begin{center}

\begin{tikzpicture}[y=0.80pt, x=0.80pt, yscale=-2.500000, xscale=2.500000, inner sep=0pt, outer sep=0pt]
\path[draw=black,line join=miter,line cap=butt,even odd rule,line width=0.800pt]
  (440.0000,517.3622) .. controls (440.0000,517.3622) and (440.0378,527.4384) ..
  (449.8967,527.4384) .. controls (459.9800,527.4384) and (459.9056,517.4064) ..
  (459.9056,517.4064);
  \path[draw=black,line join=miter, line cap=butt, even odd rule, line width=0.800pt] (440.0000,513) -- (440.0000,483);
  \path[draw=black,line join=miter, line cap=butt, even odd rule, line width=0.800pt] (459.9056,513) -- (459.9056,483);
  \path[draw=black,line join=miter, line cap=butt, even odd rule, line width=0.800pt] (465,500) -- (469,500);
    \path[draw=black,line join=miter, line cap=butt, even odd rule, line width=0.800pt] (465,501) -- (469,501);
    \path[draw=black,line join=miter,line cap=butt,even odd rule,line width=0.800pt]
  (472.0000,495.3622) .. controls (472.0000,495.3622) and (472.0378,505.4384) ..
  (481.8967,505.4384) .. controls (491.9800,505.4384) and (491.9056,495.4064) ..
  (491.9056,495.4064);
%
\node() at (442,515){$\m{A}^{op}$};
\node() at (442,480.7){$\m{A}^{op}$};
\node() at (474,493){$\m{A}^{op}$};
\node() at (460,515){$\m{A}$};
\node() at (492,493){$\m{A}$};
\node() at (460,480.7){$\m{A}$};

\end{tikzpicture}
\end{center}

\begin{lemma}\label{comp1}
Let $F$, $G \colon \m{A}^{op} \times \m{A} \rightarrow \m{C}$ be pseudofunctors and let $H \in \m{C}$. Suppose that $\beta \colon F \Rightarrow G$ is a pseudonatural transformation and that $\gamma \colon G \carrow H$ is an extrapseudonatural transformation. Then there is an extrapseudonatural transformation from $F$ to $H$ given by composites of the cells constituting $\beta$ and $\gamma$.
\end{lemma}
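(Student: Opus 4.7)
The plan is to construct the required extrapseudonatural transformation $\delta \colon F \carrow H$ by stacking $\beta$ on top of $\gamma$. For each $a \in \m{A}$ set $\delta_a := \gamma_a \cdot \beta_{(a,a)} \colon F(a,a) \to H$, where $\beta_{(a,a)}$ is the pseudonatural component of $\beta$ at the object $(a,a) \in \m{A}^{op} \times \m{A}$. For $f \colon a \to a'$ in $\m{A}$ the required invertible $2$-cell $\delta_f \colon \delta_a \cdot F(f,1_a) \Rightarrow \delta_{a'} \cdot F(1_{a'},f)$ is defined as the pasting of three invertible pieces: the pseudonaturality square $\beta_{(f,1_a)}$ of $\beta$ at the morphism $(f,1_a) \colon (a',a) \to (a,a)$ of $\m{A}^{op} \times \m{A}$, whiskered on the left by $\gamma_a$; the extrapseudonatural $2$-cell $\gamma_f$ whiskered on the right by $\beta_{(a',a)}$; and the inverse of the pseudonaturality square $\beta_{(1_{a'},f)}$ whiskered on the left by $\gamma_{a'}$.

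Since the codomain $H$ is constant and there is no additional $\m{A}$-variable, Remark \ref{constant} reduces the verification to axioms EP1, EP6, and the first part of EP5. Axiom EP6 follows from a local rewrite: for a $2$-cell $\rho \colon f \Rightarrow f'$ of $\m{A}$, naturality of whiskering together with axiom PS2 for $\beta$ (applied once at $(\rho,1_a)$ and once at $(1_{a'},\rho)$) reduces the two pastings to pastings whose only remaining discrepancy is absorbed by EP6 for $\gamma$. The first part of EP5 amounts to showing that $\delta_{1_a}$ is the identity up to coherence: one expands $\delta_{1_a}$ and uses PS3 for $\beta$ at $(1_a,1_a)$ together with EP5 for $\gamma$ to collapse the three-step composite, with the unit coherences $\phi^F$ and $\phi^G$ matching on both sides.

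The main obstacle is axiom EP1. Given composable $f \colon a \to a'$ and $g \colon a' \to a''$ in $\m{A}$, both sides of the axiom expand into large pastings in which every occurrence of $\delta_{(-)}$ contributes its three constituent $2$-cells. My plan is to start from the side featuring $\delta_{gf}$ and use PS1 for $\beta$ twice, once to rewrite the pseudonaturality square $\beta_{(gf,1)}$ as a pasting of $\beta_{(f,1)}$ and $\beta_{(g,1)}$, and once to do the same for $\beta_{(1,gf)}$; the central region is then a pasting purely of whiskered $\gamma$-cells to which axiom EP1 for $\gamma$ applies directly. Naturality of the coherence isomorphisms of $F$ and $G$ against the $\beta$-squares, together with bicategorical middle-four interchange, supplies the bookkeeping needed to align whiskerings on both sides. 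No genuinely new ideas beyond this interleaving of PS1 and EP1 are required; the difficulty is purely combinatorial, in keeping track of a visually large pasting.
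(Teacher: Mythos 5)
Your proposal is correct and takes essentially the same route as the paper: the same components $\delta_a = \gamma_a \cdot \beta_{aa}$, the same three-cell pasting of $\beta_{(f,1)}$, the whiskered $\gamma_f$, and $\beta_{(1,f)}^{-1}$ for $\delta_f$, the same reduction of the axiom list via constancy of $H$, and the same strategy of interleaving PS1 for $\beta$ with EP1 for $\gamma$ to prove EP1 (you merely run the chain of pasting diagrams in the opposite direction, which is immaterial since every cell is invertible). The one point you under-specify is the final realignment of the four $\beta$-squares: this is not just naturality of coherence cells and interchange, but two further applications of PS1 together with PS2 on the unitors of $\m{A}^{op} \times \m{A}$, which is how the paper arranges for $\beta_{(g,f)}$ to meet its own inverse and cancel.
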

\begin{proof}
The $1$-cells of the extrapseudonatural transformation are given by the composites $\delta_a = \gamma_a \cdot \beta_{aa}$. Given a $1$-cell $f \colon a \rightarrow b$ in $\m{A}$, we give $2$-cells $\delta_f \colon \delta_a \cdot F(f,1) \Rightarrow \delta_b \cdot F(1,f)$ by the following diagram.
                    \[
                        \xy
                            (0,-5)*+{F(b,a)}="1";
                            (15,-12)*+{F(a,a)}="2";
                            (15,-35)*+{G(b,a)}="3";
                            (0,-42)*+{H}="4";
                            (-15,-12)*+{F(b,b)}="5";
                            (-15,-35)*+{G(b,b)}="6";
                            (0,-28)*+{G(a,a)}="7";
                            {\ar^{F(f,1)} "1" ; "2"};
                            {\ar^{\beta_{aa}} "2" ; "3"};
                            {\ar^{\gamma_a} "3" ; "4"};
                            {\ar_{F(1,f)} "1" ; "5"};
                            {\ar_{\beta_{bb}} "5" ; "6"};
                            {\ar_{\gamma_b} "6" ; "4"};
                            {\ar|{\beta_{ba}} "1" ; "7"};
                            {\ar^{G(f,1)} "7" ; "3"};
                            {\ar_{G(1,f)} "7" ; "6"};
                            {\ar@{=>}^{\beta_{f1}} (9,-17) ; (5,-20)};
                            {\ar@{=>}^{\beta_{1f}^{-1}} (-7.5,-17) ; (-11.5,-20)};
                            {\ar@{=>}^{\gamma_f} (1,-33.5) ; (-3,-36.5)};
                            %
                        \endxy
                    \]
As per Remark \ref{constant}, $\Delta_H$ is a constant pseudofunctor and so the other $2$-cells required are all identities.

The axioms EP2-5 are simple to check, whilst EP1 requires a sequence of involved pasting diagrams. We will consider the initial and final pasting diagrams in the sequence and describe the steps required to complete the proof. The left-hand diagram of axiom EP1 is given, in this instance, by the following pasting diagram.
    \[
        \xy
            (0,0)*+{F(c,a)}="1";
            (30,0)*+{F(b,a)}="2";
            (60,0)*+{F(a,a)}="3";
            (90,-15)*+{G(a,a)}="4";
            (90,-45)*+{H}="5";
            (0,-15)*+{F(c,b)}="6";
            (0,-30)*+{F(c,c)}="7";
            (30,-45)*+{G(c,c)}="8";
            (30,-15)*+{F(b,b)}="a";
            (60,-30)*+{G(b,b)}="b";
            (30,-30)*+{G(c,b)}="c";
            (60,-15)*+{G(b,a)}="d";
            {\ar^{F(g,1)} "1" ; "2"};
            {\ar^{F(f,1)} "2" ; "3"};
            {\ar^{\beta_{aa}} "3" ; "4"};
            {\ar^{\gamma_a} "4" ; "5"};
            {\ar_{F(1,f)} "1" ; "6"};
            {\ar_{F(1,g)} "6" ; "7"};
            {\ar_{\beta_{cc}} "7" ; "8"};
            {\ar_{\gamma_{c}} "8" ; "5"};
            {\ar|{F(1,f)} "2" ; "a"};
            {\ar|{F(g,1)} "6" ; "a"};
            {\ar|{\beta_{bb}} "a" ; "b"};
            {\ar|{\gamma_b} "b" ; "5"};
            {\ar|{\beta_{cb}} "6" ; "c"};
            {\ar|{G(g,1)} "c" ; "b"};
            {\ar|{G(1,g)} "c" ; "8"};
            {\ar|{\beta_{ba}} "2" ; "d"};
            {\ar|{G(f,1)} "d" ; "4"};
            {\ar|{G(1,f)} "d" ; "b"};
            (15,-7.5)*+{\cong};
            {\ar@{=>}^{\beta_{f1}} (60,-5.5) ; (60,-9.5)};
            {\ar@{=>}^{\beta_{1f}^{-1}} (45,-13) ; (45,-17)};
            {\ar@{=>}^{\beta_{g1}} (30,-20.5) ; (30,-24.5)};
            {\ar@{=>}^{\beta_{1g}^{-1}} (15,-28) ; (15,-32)};
            {\ar@{=>}^{\gamma_{f}} (75,-25) ; (75,-29)};
            {\ar@{=>}^{\gamma_g} (48,-35.5) ; (48,-39.5)};
        \endxy
    \]
The right-hand diagram of EP1 is given by the following diagram.
    \[
        \xy
            (0,0)*+{F(c,a)}="1";
            (60,0)*+{F(b,a)}="2";
            (90,-15)*+{F(a,a)}="3";
            (90,-30)*+{G(a,a)}="4";
            (90,-45)*+{H}="5";
            (0,-30)*+{F(c,b)}="6";
            (30,-45)*+{F(c,c)}="7";
            (60,-45)*+{G(c,c)}="8";
            (60,-30)*+{G(c,a)}="a";
            {\ar^{F(g,1)} "1" ; "2"};
            {\ar^{F(f,1)} "2" ; "3"};
            {\ar^{\beta_{aa}} "3" ; "4"};
            {\ar^{\gamma_a} "4" ; "5"};
            {\ar_{F(1,f)} "1" ; "6"};
            {\ar_{F(1,g)} "6" ; "7"};
            {\ar_{\beta_{cc}} "7" ; "8"};
            {\ar_{\gamma_{c}} "8" ; "5"};
            {\ar|{\beta_{ca}} "1" ; "a"};
            {\ar|{G(gf,1)} "a" ; "4"};
            {\ar|{G(1,gf)} "a" ; "8"};
            {\ar|{F(gf,1)} "1" ; "3"};
            {\ar|{F(1,gf)} "1" ; "7"};
            (60,-5)*+{\cong};
            (10,-30)*+{\cong};
            {\ar@{=>}^{\beta_{gf,1}} (60,-18) ; (60,-22)};
            {\ar@{=>}^{\beta_{1,gf}^{-1}} (30,-28) ; (30,-32)};
            {\ar@{=>}^{\gamma_{gf}} (75,-35.5) ; (75,-39.5)};
        \endxy
    \]

The coherence cells in the top left of the first diagram allow us to use the composition axiom PS1 for $\beta$ to replace $\beta_{g1} \ast 1_{F(1,f)}$ and $1_{F(g,1)} \ast \beta_{1f}^{-1}$ with the corresponding components of $\beta$ and coherence cells for $G$ on the composites $G(g,1) \cdot G(1,f)$ and $G(1,f) \cdot G(g,1)$. The axiom PS2 for $\beta$, specifically on the unitors in $\m{A}^{op} \times \m{A}$, gives a new diagram where $\beta_{g,f}$ meets its inverse, leaving appropriate coherence cells in the square adjoining $\gamma_g$ and $\gamma_f$ to apply $EP1$ for $\gamma$. This leaves a diagram with $\gamma_{gf}$ in the lower right corner, at which point we use instances of the composition axiom for $\beta$ followed by its naturality on unitors again to yield the final diagram. The rest of the axioms are simple to check.
\end{proof}

The following lemma is the opposite of the previous lemma, having an extrapseudonatural transformation out of a constant pseudofunctor. Graphically the lemma corresponds to the following `stalagmite' diagram.
\begin{center}
\begin{tikzpicture}[y=0.80pt, x=0.80pt, yscale=2.500000, xscale=2.500000, inner sep=0pt, outer sep=0pt]
\path[draw=black,line join=miter,line cap=butt,even odd rule,line width=0.800pt]
  (440.0000,518) .. controls (440.0000,518) and (440.0378,527.4384) ..
  (449.8967,527.4384) .. controls (459.9800,527.4384) and (459.9056,518) ..
  (459.9056,518);
  \path[draw=black,line join=miter, line cap=butt, even odd rule, line width=0.800pt] (440.0000,512) -- (440.0000,484);
  \path[draw=black,line join=miter, line cap=butt, even odd rule, line width=0.800pt] (459.9056,512) -- (459.9056,484);
  \path[draw=black,line join=miter, line cap=butt, even odd rule, line width=0.800pt] (465,500) -- (469,500);
    \path[draw=black,line join=miter, line cap=butt, even odd rule, line width=0.800pt] (465,501) -- (469,501);
    \path[draw=black,line join=miter,line cap=butt,even odd rule,line width=0.800pt]
  (472.0000,496) .. controls (472.0000,496) and (472.0378,505.4384) ..
  (481.8967,505.4384) .. controls (491.9800,505.4384) and (491.9056,496) ..
  (491.9056,496);
%
\node() at (442,515){$\m{B}^{op}$};
\node() at (442,480.7){$\m{B}^{op}$};
\node() at (474,493){$\m{B}^{op}$};
\node() at (460,515){$\m{B}$};
\node() at (492,493){$\m{B}$};
\node() at (460,480.7){$\m{B}$};

\end{tikzpicture}
\end{center}
\begin{lemma}
Let $F \in \m{A}$ and let $G$, $H \colon \m{B}^{op} \times \m{B} \rightarrow \m{C}$ be pseuodfunctors. Suppose that $\beta \colon F \carrow G$ is an extrapseudonatural transformation and that $\gamma \colon G \Rightarrow H$ is a pseudonatural transformation. Then there is an extrapseudonatural transformation from $F$ to $H$ given by composites of the cells constituting $\beta$ and $\gamma$.
\end{lemma}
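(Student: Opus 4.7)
The construction is formally dual to that of Lemma~\ref{comp1}: the constant pseudofunctor now sits on the domain side of the extrapseudonatural transformation rather than on the codomain side. My plan is to write down the data of the candidate $\delta\colon F \carrow H$, invoke the dual of Remark~\ref{constant} to cut the list of axioms requiring verification, and then carry out the one genuinely non-trivial axiom by mirroring the argument used for EP1 in the proof of Lemma~\ref{comp1}.

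I would set $\delta_b = \gamma_{(b,b)} \cdot \beta_b \colon F \to H(b,b)$ on objects. For $g \colon b \to b'$ in $\m{B}$ the extrapseudonatural $2$-cell $\delta_g$ would be the pasting of three whiskered cells: the inverse of the pseudonaturality $2$-cell $\gamma_{(g,1)}$ whiskered on the right with $\beta_{b'}$, the cell $\beta_g$ whiskered on the left with $\gamma_{(b,b')}$, and the pseudonaturality cell $\gamma_{(1,g)}$ whiskered on the right with $\beta_b$. Because $F$ is constant, the other cells demanded by Definition~\ref{def:epnat} can be taken to be identities, and the dual of Remark~\ref{constant} leaves only EP2, EP7, and the second clause of EP5 to verify. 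EP5 collapses to the second clause of EP5 for $\beta$ combined with PS3 for $\gamma$ at $(b,b)$, whilst EP7 follows from EP7 for $\beta$ together with two instances of PS2 for $\gamma$, applied to the $2$-cells $(\sigma,1)$ and $(1,\sigma)$ in $\m{B}^{op}\times\m{B}$ induced by a $2$-cell $\sigma$ in $\m{B}$.

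The main obstacle is EP2. Given composable $g \colon b \to b'$ and $h \colon b' \to b''$ in $\m{B}$, I would begin with the left-hand pasting of EP2 for $\delta$ and first apply PS1 for $\gamma$ twice to expand $\gamma_{(hg,1)}$ and $\gamma_{(1,hg)}$ into composites involving $\gamma_{(h,1)}$, $\gamma_{(g,1)}$, $\gamma_{(1,h)}$, and $\gamma_{(1,g)}$; this introduces the mixed intermediate objects in $H$ and $G$ in which $\beta_g$ and $\beta_h$ can sit side by side. The pseudofunctor coherence for $\gamma$ contributes associators which may be absorbed using PS2 on the unitors of $\m{B}^{op}\times\m{B}$, at which point EP2 for $\beta$ applies across the middle region to produce a diagram built around $\beta_{hg}$. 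A final application of PS1 for $\gamma$, used in reverse, folds the outer $\gamma$-cells back into the pseudonaturality squares at $(hg,1)$ and $(1,hg)$, delivering the right-hand pasting of EP2. The bookkeeping with coherence cells is cumbersome but structurally identical to the EP1 reduction in Lemma~\ref{comp1}.
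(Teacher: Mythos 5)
Your proposal is correct and follows exactly the route the paper takes: its proof of this lemma consists of the single remark that the argument is analogous to that of Lemma~\ref{comp1} with EP2 now being the involved axiom, and your construction of $\delta_b = \gamma_{(b,b)}\cdot\beta_b$, the three-cell pasting for $\delta_g$, the reduction via the dual of Remark~\ref{constant} to EP2, EP7 and the second clause of EP5, and the PS1/PS2/EP2-for-$\beta$ reduction is precisely the mirrored argument the paper is alluding to. Your write-up is simply a more explicit elaboration of the paper's one-line proof.
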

\begin{proof}
This proof is analogous to that of the previous lemma, only this time the axiom EP2 is the involved part.
\end{proof}

The final composition lemma corresponds to a graphical `yanking' of strings.
\begin{center}

\begin{tikzpicture}[y=0.80pt, x=0.80pt, yscale=-2.500000, xscale=2.500000, inner sep=0pt, outer sep=0pt]
\path[draw=black,line join=miter,line cap=butt,even odd rule,line width=0.800pt]
  (425.5000,607.1122) .. controls (425.5000,607.1122) and (425.5378,617.1884) ..
  (435.3967,617.1884) .. controls (445.4800,617.1884) and (445.4056,607.1564) ..
  (445.4056,607.1564);
\path[draw=black,line join=miter,line cap=butt,even odd rule,line width=0.800pt]
  (445.5000,602.4365) .. controls (445.5000,602.4365) and (445.5378,592.3603) ..
  (455.3967,592.3603) .. controls (465.4800,592.3603) and (465.4056,602.3923) ..
  (465.4056,602.3923);
\path[draw=black,line join=miter,line cap=butt,even odd rule,line width=0.800pt]
  (425.5000,577.3622) -- (425.5000,602.3622);
\path[draw=black,line join=miter,line cap=butt,even odd rule,line width=0.800pt]
  (465.1250,607.4872) -- (465.1250,632.4872);
\path[draw=black,line join=miter,line cap=butt,even odd rule,line width=0.800pt]
  (495.0000,576.9872) -- (495.0000,632.4872);
\node() at (425.5000,604.6122){$\m{A}$};
\node() at (445.4056,604.6122){$\m{A}^{op}$};
\node() at (465.4056,604.6122){$\m{A}$};
\node() at (425.5000, 575){$\m{A}$};
\node() at (495, 575){$\m{A}$};
\node() at (495, 634.5){$\m{A}$};
\node() at (465.4056, 634.5){$\m{A}$};
  \path[draw=black,line join=miter, line cap=butt, even odd rule, line width=0.800pt] (478,604.1122) -- (482,604.1122);
    \path[draw=black,line join=miter, line cap=butt, even odd rule, line width=0.800pt] (478,605.1122) -- (482,605.1122);
\end{tikzpicture}
\end{center}

\begin{lemma}
Let $F,H \colon \m{A} \rightarrow \m{B}$ and $G \colon \m{A} \times \m{A}^{op} \times \m{A} \rightarrow \m{B}$ be pseudofunctors. Suppose that for each $a \in \m{A}$
    \[
        \beta_{b} \colon Fb \carrow G(-,-,b), \gamma_{a-} \colon G(a,-,-) \carrow Ha
    \]
are extrapseudonatural transformations and that
    \[
        \beta_{a-} \colon F \Rightarrow G(a,a,-), \gamma_{-b} \colon G(-,b,b) \Rightarrow H
    \]
are pseudonatural transformations such that $(\beta_{a-})_b = (\beta_{b})_a$ and $(\gamma_{b})_a = (\gamma_{-a})_b$ for all $a, b \in \m{A}$. Then there is a pseudonatural transformation from $F$ to $H$ given by composites of the cells constituting $\beta$ and $\gamma$.
\end{lemma}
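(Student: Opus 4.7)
The approach is the standard ``yanking'' composite from string diagram calculus. I define the $1$-cell component at $a \in \mathcal{A}$ by
\[
\delta_a \;=\; (\gamma_{-a})_a \cdot (\beta_{a-})_a ,
\]
which by the compatibility hypotheses also equals $(\gamma_{a-})_a \cdot (\beta_a)_a$; both presentations are used at different stages. For $f \colon a \to a'$ in $\mathcal{A}$, the structure $2$-cell $\delta_f \colon Hf \cdot \delta_a \Rightarrow \delta_{a'} \cdot Ff$ is defined as a four-step pasting which first uses (the inverse of) pseudonaturality of $\gamma_{-a}$ at $f$ to move $Hf$ past $(\gamma_{-a})_a$, introducing a factor of $G(f,1,1)$; then extrapseudonaturality of $\beta_a$ on $f$ to replace $G(f,1,1)\cdot(\beta_a)_a$ by $G(1,f,1)\cdot(\beta_a)_{a'}$; then extrapseudonaturality of $\gamma_{a'-}$ on $f$ to replace $(\gamma_{a'-})_a\cdot G(1,f,1)$ by $(\gamma_{a'-})_{a'}\cdot G(1,1,f)$; and finally pseudonaturality of $\beta_{a'-}$ at $f$ to pull $Ff$ out through $(\beta_{a'-})_{a'}$. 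The compatibility identifications are used silently between steps to reindex the $1$-cell components, and $\delta_f$ is invertible since each constituent $2$-cell is.

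It then remains to verify the pseudonaturality axioms PS1, PS2, PS3 for $\delta$. Axiom PS3 (unitality) is a direct combination of EP5 for $\beta_a$ and $\gamma_{a-}$ with PS3 for $\beta_{a-}$ and $\gamma_{-b}$, and axiom PS2 (respect for $2$-cells) follows analogously from EP6 and EP7 for the extrapseudonatural halves together with PS2 for the pseudonatural halves. The principal obstacle is PS1 (compositivity): comparing the pasting for $\delta_{gf}$ with the paste of $\delta_g$ after $\delta_f$ produces a long diagram containing eight zig-zag $2$-cells, which must be re-assembled into the four that define $\delta_{gf}$. The plan for this rearrangement is to apply it locally in four stages: PS1 for $\gamma_{-a}$ at one end, PS1 for $\beta_{a''-}$ at the other, and the composition axioms EP1 for $\beta_a$ and EP2 for $\gamma_{a''-}$ in the two middle regions, with pseudofunctor coherence for $G$ mediating the reindexings forced by the compatibility hypotheses. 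As elsewhere in the paper the manipulation is cumbersome but essentially routine once the correct order of rewrites is identified.
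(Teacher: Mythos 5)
Your construction of the components coincides with the paper's: the same $1$-cells $\gamma_{aa}\cdot\beta_{aa}$ and the same four-cell pasting for the structure $2$-cell (you read the pasting from the opposite end and with the opposite orientation, which is cosmetic since every cell is invertible). The gap is in your plan for PS1. Take $f\colon a\to b$ and $g\colon b\to c$. Your four decomposition stages produce cells indexed only at the \emph{outer} objects $a$ and $c$: decomposing the extra cell of $\beta_a$ at $gf$ by the composition axiom yields the extra cells of $\beta_a$ at $g$ and at $f$, and decomposing the pseudonaturality cell of $\beta_{c-}$ at $gf$ by PS1 yields the pseudonaturality cells of $\beta_{c-}$ at $g$ and at $f$. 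But the pasting of $\delta_g$ after $\delta_f$ contains the extra cell of $\beta_b$ at $g$ and the pseudonaturality cell of $\beta_{b-}$ at $f$, where $b$ is the \emph{middle} object, and likewise for $\gamma$. Converting, say, the extra cell of $\beta_a$ at $g$ into the extra cell of $\beta_b$ at $g$ is not achievable by pseudofunctor coherence for $G$ together with the $1$-cell identifications $(\beta_{a-})_b=(\beta_b)_a$; it requires a $2$-cell compatibility between the pseudonatural family $\{\beta_{a-}\}$ and the extrapseudonatural family $\{\beta_b\}$, namely the mixed axioms EP3 and EP4 for the assembled transformation. The paper's proof inserts exactly this step (``we are then able to apply axioms EP4 and EP3, respectively, for the mixed components of $\beta$ and $\gamma$'') after the coherence cells cancel in the middle of the diagram; your rewrite sequence omits it and therefore cannot close.

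Two smaller points. Your labels for the middle stages are swapped: since $\beta_a \colon Fa \carrow G(-,-,a)$ has constant source, its extra $2$-cells live on the codomain side and their composition axiom is EP2, not EP1; dually, the relevant composition axiom for $\gamma_{a''-}$, whose target is constant, is EP1. Your treatment of PS2 and PS3 via EP6/EP7, EP5 and the unit axioms agrees with the paper's (terse) argument.
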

\begin{proof}
We need to construct a pseudonatural transformation from $F$ to $H$. The $1$-cell components are given by $\delta_{a} = \gamma_{aa} \cdot \beta_{aa}$, whilst the $2$-cell component, $\delta_f$, for some $f \colon a \rightarrow b$ in $\m{A}$ is given by the pasting diagram below.
    \[
        \xy
            (0,0)*+{Fa}="1";
            (50,0)*+{Fb}="2";
            (50,-30)*+{G(b,b,b)}="3";
            (50,-60)*+{Hb}="4";
            (0,-30)*+{G(a,a,a)}="5";
            (0,-60)*+{Ha}="6";
            (25,-15)*+{G(b,b,a)}="7";
            (25,-45)*+{G(b,a,a)}="8";
            {\ar^{Ff} "1" ; "2"};
            {\ar^{\beta_{bb}} "2" ; "3"};
            {\ar^{\gamma_{bb}} "3" ; "4"};
            {\ar_{\beta_{aa}} "1" ; "5"};
            {\ar_{\gamma_{aa}} "5" ; "6"};
            {\ar_{Hf} "6" ; "4"};
            {\ar|{\beta_{ba}} "1" ; "7"};
            {\ar|{G(1,1,f)} "7" ; "3"};
            {\ar|{G(1,f,1)} "7" ; "8"};
            {\ar|{G(f,1,1)} "5" ; "8"};
            {\ar|{\gamma_{ba}} "8" ; "4"};
            {\ar@{=>}^{\beta_{bf}} (35,-8) ; (35,-12)};
            {\ar@{=>}^{\gamma_{fa}} (15,-48) ; (15,-52)};
            {\ar@{=>}^{\beta_{fa}^{-1}} (10,-18) ; (10,-22)};
            {\ar@{=>}^{\gamma_{bf}^{-1}} (40,-38) ; (40,-42)};
        \endxy
    \]
Similar to the proof of the previous lemmas, the proof of the composition axiom PS1 relies on a sequence of pasting diagrams. The first diagram in the sequence is given by the component $\delta_{gf}$ with the coherence cells for $F$ and $G$ applied on the top and bottom.
    \[
        \xy
            (0,0)*+{Fa}="1";
            (50,0)*+{Fc}="2";
            (50,-30)*+{G(c,c,c)}="3";
            (50,-60)*+{Hc}="4";
            (0,-30)*+{G(a,a,a)}="5";
            (0,-60)*+{Hc.}="6";
            (25,-15)*+{G(c,c,a)}="7";
            (25,-45)*+{G(c,a,a)}="8";
            (25,10)*+{Fb}="9";
            (25,-70)*+{Hb}="10";
            {\ar|{F(gf)} "1" ; "2"};
            {\ar^{\beta_{cc}} "2" ; "3"};
            {\ar^{\gamma_{cc}} "3" ; "4"};
            {\ar_{\beta_{aa}} "1" ; "5"};
            {\ar_{\gamma_{aa}} "5" ; "6"};
            {\ar|{H(gf)} "6" ; "4"};
            {\ar|{\beta_{ba}} "1" ; "7"};
            {\ar|{G(1,1,gf)} "7" ; "3"};
            {\ar|{G(1,gf,1)} "7" ; "8"};
            {\ar|{G(gf,1,1)} "5" ; "8"};
            {\ar|{\gamma_{ca}} "8" ; "4"};
            {\ar^{Ff} "1" ; "9"};
            {\ar^{Fg} "9" ; "2"};
            {\ar_{Gf} "6" ; "10"};
            {\ar_{Gg} "10" ; "4"};
            {\ar@{=>}^{\beta_{c(gf)}} (35,-8) ; (35,-12)};
            {\ar@{=>}^{\gamma_{(gf)a}} (15,-48) ; (15,-52)};
            {\ar@{=>}^{\beta_{(gf)a}^{-1}} (10,-18) ; (10,-22)};
            {\ar@{=>}^{\gamma_{c(gf)}^{-1}} (40,-38) ; (40,-42)};
            {\ar@{=>}^{\phi^F_{gf}} (25,7) ; (25,2)};
            {\ar@{=>}^{(\phi^G_{gf})^{-1}} (25,-62) ; (25,-67)};
        \endxy
    \]
The final diagram is given below.
    \[
        \xy
            (0,0)*+{Fa}="1";
            (100,0)*+{Fc}="2";
            (100,-30)*+{G(c,c,c)}="3";
            (100,-60)*+{Hc}="4";
            (0,-30)*+{G(a,a,a)}="5";
            (0,-60)*+{Ha}="6";
            (50,-30)*+{G(b,b,b)}="7";
            (25,-15)*+{G(b,b,a)}="8";
            (50,0)*+{Fb}="9";
            (50,-60)*+{Hb}="10";
            (25,-45)*+{G(b,a,a)}="11";
            (75,-15)*+{G(c,c,b)}="12";
            (75,-45)*+{G(c,b,b)}="13";
            {\ar^{\beta_{cc}} "2" ; "3"};
            {\ar^{\gamma_{cc}} "3" ; "4"};
            {\ar_{\beta_{aa}} "1" ; "5"};
            {\ar_{\gamma_{aa}} "5" ; "6"};
            {\ar^{Ff} "1" ; "9"};
            {\ar^{Fg} "9" ; "2"};
            {\ar_{Gf} "6" ; "10"};
            {\ar_{Gg} "10" ; "4"};
            {\ar|{\beta_{ba}} "1" ; "8"};
            {\ar|{G(1,1,f)} "8" ; "7"};
            {\ar|{G(1,f,1)} "8" ; "11"};
            {\ar|{G(f,1,1)} "5" ; "11"};
            {\ar|{\gamma_{ba}} "11" ; "10"};
            {\ar|{\beta_{bb}} "9" ; "7"};
            {\ar|{\gamma_{bb}} "7" ; "10"};
            {\ar|{\beta_{cb}} "9" ; "12"};
            {\ar|{G(1,1,g)} "12" ; "3"};
            {\ar|{G(1,g,1)} "12" ; "13"};
            {\ar|{G(g,1,1)} "7" ; "13"};
            {\ar|{\gamma_{cb}} "13" ; "4"};
            {\ar@{=>}^{\beta_{bf}} (35,-8) ; (35,-12)};
            {\ar@{=>}^{\gamma_{fa}} (15,-48) ; (15,-52)};
            {\ar@{=>}^{\beta_{fa}^{-1}} (10,-18) ; (10,-22)};
            {\ar@{=>}^{\gamma_{bf}^{-1}} (40,-38) ; (40,-42)};
            {\ar@{=>}^{\beta_{cg}} (85,-8) ; (85,-12)};
            {\ar@{=>}^{\gamma_{gb}} (65,-48) ; (65,-52)};
            {\ar@{=>}^{\beta_{gb}^{-1}} (60,-18) ; (60,-22)};
            {\ar@{=>}^{\gamma_{cg}^{-1}} (90,-38) ; (90,-42)};
        \endxy
    \]
The first step is to use the composition axioms (PS1) for the pseudonatural components of $\beta$ and $\gamma$, replacing the components at $gf$ with those for $g$ and $f$, whilst also introducing the composition coherence cells for $G(-,a,a)$ and $G(c,c,-)$. Now we can apply axioms EP2 and EP1, respectively, for the extrapseudonatural components of $\beta$ and $\gamma$. This gives a diagram with extrapseudonatural components for $\beta$ and $\gamma$ at $g$ and $f$, along with a large number of coherence cells in the middle of the diagram. At this point many of the coherence cells cancel out and we are then able to apply axioms EP4 and EP3, respectively, for the mixed components of $\beta$ and $\gamma$. The coherence cells introduced by the use of these axioms then cancel in the middle of the diagram, yielding the second diagram pictured above.

Proving the unit axiom, PS3, is fairly straightforward. Writing out the component $\delta_{id_a}$ we find that a large amount of the diagram is made of identities from the extrapseudonaturality axiom EP5. The remainder of the proof relies on the unit axioms for the bicategory $\m{B}$. The simplest part of the proof is to show that PS2 holds. A simple chase of $2$-cells through a diagram ensures that this holds.
\end{proof}

\section{Bicodescent Objects}\label{bicodsection}
Descent objects, the dual notion to codescent objects, first appeared in \cite{Str76} before being formally defined by Street in \cite{Str87}. We will base our definition of bicodescent object upon that given in \cite{Lac02}, where codescent objects are used to study coherence for the algebras of $2$-monads. Each of these treatments of codescent objects goes on to define them as weighted colimits, whereas our weaker notion of bicodescent object, being a bicolimit, has no description using weights. However, one could investigate the connection between weighted bicolimits and bicoends, following \cite{Str80}, \cite{Str87}. We will go on to recast the definition of bicodescent object as a universal object amongst extrapseudonatural transformations allowing us to obtain a Fubini theorem for bicodescent objects.

\begin{definition}\label{def:bicod1}
Let $\m{B}$ be a bicategory. Coherence data consists of a diagram
    \[
            \xymatrix{
            X_1  \ar[r]|>>>>>{v} & X_2 \ar@<1.ex>[l]^>>>>>{w} \ar@<-1.ex>[l]_>>>>>{u}& \ar@<1.ex>[l]^>>>>>>{r}\ar[l]|>>>>>>{q} \ar@<-1.ex>[l]_>>>>>>{p}X_3.
            }
    \]
in $\m{B}$ along with invertible $2$-cells
    \begin{align*}
        \delta \colon uv \Rightarrow &id_{X_1}, \gamma \colon id_{X_1} \Rightarrow wv, \kappa \colon up \Rightarrow uq,\\
        &\lambda \colon wr \Rightarrow wq, \rho \colon ur \Rightarrow wp.
    \end{align*}
The \emph{bicodescent object} of this coherence data consists of a $0$-cell $X$, a $1$-cell $x \colon X_1 \rightarrow X$, and an invertible $2$-cell $\chi \colon xu \Rightarrow xw$ in $\m{B}$ satisfying the following axioms.

\begin{itemize}
    \item[BC1] The following pasting diagrams are equal.
        \[
            \xy
                (0,0)*+{X_3}="a";
                (45,0)*+{X_2}="b";
                (-15,-12)*+{X_2}="c";
                (15,-12)*+{X_2}="d";
                (30,-12)*+{X_1}="e";
                (60,-12)*+{X_1}="f";
                (0,-24)*+{X_1}="g";
                (45,-24)*+{X;}="h";
                {\ar^{p} "a" ; "b"};
                {\ar^{u} "b" ; "f"};
                {\ar^{x} "f" ; "h"};
                {\ar_{q} "a" ; "c"};
                {\ar|{r} "a" ; "d"};
                {\ar|{u} "d" ; "e"};
                {\ar|{w} "b" ; "e"};
                {\ar_{w} "c" ; "g"};
                {\ar_{x} "g" ; "h"};
                {\ar|{x} "e" ; "h"};
                {\ar|{w} "d" ; "g"};
                {\ar@{=>}^{\rho} (22.5,-4) ; (22.5,-8)};
                {\ar@{=>}^{\chi} (22.5,-16) ; (22.6, -20)};
                {\ar@{=>}^{\lambda} (0,-10) ; (0,-14)};
                {\ar@{=>}^{\chi} (45,-10) ; (45,-14)};
                (0,-40)*+{X_3}="1";
                (45,-40)*+{X_2}="2";
                (-15,-52)*+{X_2}="3";
                (60,-52)*+{X_1}="4";
                (0,-64)*+{X_1}="5";
                (45,-64)*+{X}="6";
                {\ar^{p} "1" ; "2"};
                {\ar^{u} "2" ; "4"};
                {\ar^{x} "4" ; "6"};
                {\ar_{q} "1" ; "3"};
                {\ar_{w} "3" ; "5"};
                {\ar_{x} "5" ; "6"};
                {\ar|{u} "3" ; "4"};
                {\ar@{=>}^{\kappa} (22.5,-44) ; (22.5,-48)};
                {\ar@{=>}^{\chi} (22.5,-56) ; (22.5,-60)};
                {\ar@{=} (22.5,-30.5) ; (22.5,-33.5)};
            \endxy
        \]
        \item[BC2] The following pasting diagrams are equal.
        \[
            \xy
                (0,0)*+{X_1}="a";
                (15,0)*+{X_2}="b";
                (30,12)*+{X_1}="c";
                (30,-12)*+{X_1}="d";
                (45,0)*+{X}="e";
                {\ar^{v} "a" ; "b"};
                {\ar^{u} "b" ; "c"};
                {\ar_{w} "b" ; "d"};
                {\ar^{x} "c" ; "e"};
                {\ar_{x} "d" ; "e"};
                {\ar@{=>}^{\chi} (30,2) ; (30,-2)};
                (60,0)*+{X_1}="1";
                (75,12)*+{X_2}="2";
                (75,-12)*+{X_2}="3";
                (90,0)*+{X_1}="4";
                (105,0)*+{X}="5";
                {\ar^{v} "1" ; "2"};
                {\ar_{v} "1" ; "3"};
                {\ar^{u} "2"; "4"};
                {\ar_{w} "3" ; "4"};
                {\ar^{x} "4" ; "5"};
                {\ar|{id_{X_1}} "1" ; "4"};
                {\ar@{=>}^{\delta} (75,8) ; (75,4)};
                {\ar@{=>}^{\gamma} (75,-4) ; (75,-8)};
                {\ar@{=} (50.5,0) ; (54.5,0)};
            \endxy
        \]
        \item[BC3] Given any other $0$-cell $Y$, $1$-cell $y \colon X_1 \rightarrow Y$, and $2$-cell $\upsilon \colon yu \Rightarrow yw$ which satisfy the previous two axioms, there exists a $1$-cell $h \colon X \rightarrow Y$ and an isomorphism $\zeta \colon hx \Rightarrow y$ such that the following pasting diagrams are equal.
                \[
                    \xy
                        (0,0)*{X_2}="a";
                        (15,0)*+{X_1}="b";
                        (30,0)*+{X}="c";
                        (7.5,-12)*+{X_1}="d";
                        (22.5,-12)*{Y}="e";
                        {\ar^{u} "a" ; "b"};
                        {\ar^{x} "b" ; "c"};
                        {\ar^{h} "c" ; "e"};
                        {\ar_{y} "d" ; "e"};
                        {\ar|{y} "b" ; "e"};
                        {\ar_{w} "a" ; "d"};
                        {\ar@{=>}^{\zeta} (22.5,-3) ; (22.5,-7)};
                        {\ar@{=>}^{\upsilon} (10,-4) ; (12.5,-8)};
                        (50,-6)*{X_1}="a";
                        (50,6)*+{X_2}="b";
                        (65,6)*+{X_1}="c";
                        (57.5,-18)*+{Y}="d";
                        (65,-6)*{X}="e";
                        {\ar^{u} "b" ; "c"};
                        {\ar_{w} "b" ; "a"};
                        {\ar^{x} "c" ; "e"};
                        {\ar|{x} "a" ; "e"};
                        {\ar^{h} "e" ; "d"};
                        {\ar_{y} "a" ; "d"};
                        {\ar@{=>}^{\chi} (57.5,2) ; (57.5,-2)};
                        {\ar@{=>}^{\zeta} (57.5,-8) ; (57.5,-12)};
                        {\ar@{=} (38,-6) ; (42,-6)};
                    \endxy
                \]
        \item[BC4] Given a $0$-cell $Y$, $1$-cells $h, k \colon X \Rightarrow Y$, and a $2$-cell $\beta \colon h \Rightarrow kx$ satisfying
                \[
                    \xy
                    (0,0)*+{X_2}="a";
                    (12,12)*+{X_1}="b";
                    (24,24)*+{X}="c";
                    (24,0)*+{X}="d";
                    (36,12)*+{Y}="e";
                    (12,-12)*+{X_1}="f";
                    {\ar^{u} "a" ; "b"};
                    {\ar^{x} "b" ; "c"};
                    {\ar^{h} "c" ; "e"};
                    {\ar_{w} "a" ; "f"};
                    {\ar_{x} "f" ; "d"};
                    {\ar_{k} "d" ; "e"};
                    {\ar^{x} "b" ; "d"};
                    {\ar@{=>}^{\beta} (24,14) ; (24,10)};
                    {\ar@{=>}^{\chi} (12,2) ; (12,-2)};
                    (50,12)*+{X_2}="1";
                    (62,24)*+{X_1}="2";
                    (74,12)*+{X}="3";
                    (74,-12)*+{X}="4";
                    (86,0)*+{Y}="5";
                    (62,0)*+{X_1}="6";
                    {\ar^{u} "1" ; "2"};
                    {\ar^{x} "2" ; "3"};
                    {\ar^{h} "3" ; "5"};
                    {\ar_{w} "1" ; "6"};
                    {\ar_{x} "6" ; "4"};
                    {\ar_{k} "4" ; "5"};
                    {\ar^{x} "6" ; "3"};
                    {\ar@{=>}^{\chi} (62,14) ; (62,10)};
                    {\ar@{=>}^{\beta} (74,2) ; (74,-2)};
                    {\ar@{=} (41,0) ; (45,0)};
                    \endxy
                \]
            there exists a unique $2$-cell $\beta' \colon h \Rightarrow k$ such that $\beta' \ast 1_{x} = \beta$.
\end{itemize}
\end{definition}

Now we have defined bicodescent objects we will liken them to coends. Coends can be described as a colimit for functors of the form $F \colon \m{A}^{op} \times \m{A} \rightarrow \m{C}$, being given as the coequalizer
    \[
            \xymatrix@1{
            \int^{a} F(a,a) \, & {\displaystyle \coprod_{a} F(a,a)} \ar^{i}[l] & \ar@<1.ex>[l]^>>>>>{\rho} \ar@<-1.ex>[l]_>>>>>{\lambda} {\displaystyle \coprod_{\overset{f}{a \rightarrow}b} F(b,a)}
            }
    \]
where $\lambda$ and $\rho$ act in a similar manner to $u$ and $w$ below. In our case, a bicodescent object will be a bicolimit for pseudofunctors of the form $P \colon \m{B}^{op} \times \m{B} \rightarrow \m{C}$. The previous definition only has two axioms but requires setting up a lot of data, whereas using extrapseudonatural transformations requires little in the specification of data with the trade-off of checking a few more axioms.

Given a pseudofunctor $P \colon \m{B}^{op} \times \m{B} \rightarrow \m{C}$ we describe its coherence data as follows. 
    \[
            \xymatrix@1{
            {\displaystyle \coprod_{a \in ob\m{B}} P(a,a) } \ar@<1.ex>@{<-}[r]^>>>>>{u} \ar@<-1.ex>@{<-}[r]_>>>>>{w} \,\,& \,\,\ar@{<-}[l]|>>>>>{v} {\displaystyle \coprod_{f} P(b,a)} \ar@<1.ex>@{<-}[r]^>>>>>{p} \ar@{<-}[r]|>>>>>{q} \ar@<-1.ex>@{<-}[r]_>>>>>{r} \,\,&\,\, {\displaystyle \coprod_{\theta} P(c,a)}
            }
    \]
The middle coproduct is indexed over $1$-cells $f \colon a \rightarrow b$ while the last coproduct is indexed over $2$-cells $\theta \colon gf \rightarrow h$ for $1$-cells $f \colon a \rightarrow b$, $g \colon b \rightarrow c$, and $h \colon a \rightarrow c$.

In the following, the $1$-cells $I_a$ and $J_f$ are coproduct inclusions. The $1$-cell $u$ is determined by the $1$-cells
    \[
        P(b,a) \overset{P(f,1)}{\longrightarrow} P(a,a) \overset{I_{a}}{\longrightarrow} \coprod_a P(a,a),
    \]
the $1$-cell $w$ is determined by the $1$-cells
    \[
        P(b,a) \overset{P_{bf}}{\longrightarrow} P(b,b) \overset{I_b}{\longrightarrow} \coprod_a P(a,a),
    \]
and the $1$-cell $v$ is determined by the inclusion on identities. The $1$-cell $p$ is characterised by the $1$-cells
    \[
        P(c,a) \overset{P_{ga}}{\longrightarrow} P(b,a) \overset{J_f}{\longrightarrow} \coprod_{f \colon a \rightarrow b} P(b,a),
    \]
the $1$-cell $q$ is determined by the $1$-cells
    \[
        P(c,a) \overset{J_h}{\longrightarrow} \coprod_{f \colon a \rightarrow b} P(b,a),
    \]
and the $1$-cell $r$ is characterised by the $1$-cells
    \[
        P(c,a) \overset{P_{cf}}{\longrightarrow} P(c,b) \overset{J_g}{\longrightarrow} \coprod_{f \colon a \rightarrow b} P(b,a).
    \]

When we consider a pseudofunctor, say $F \colon \m{A} \rightarrow \m{B}$, we will write its coherence cells as follows. For composition, on $1$-cells $f \colon a \rightarrow b$, $g \colon b \rightarrow c$ in $\m{A}$, we write
    \[
        \phi^F_{gf} \colon F(g) \cdot F(f) \Rightarrow F(g\cdot f)
    \]
with
    \[
        \phi^F_{aa} = \phi^F_{id_a,id_a}
    \]
while for identities we write
    \[
        \phi^F_a \colon F(id_a) \Rightarrow id_{Fa}
    \]
where $a \in \m{A}$.
We now use the corresponding cells for $P$ in order to determine the coherence data for the bicodescent object of $P$. We will omit certain indices on the pseudofunctor coherence cells for $P$ when it is obvious. The $2$-cell $\delta \colon uv \Rightarrow id$ is determined by the $2$-cells
    \[
        \xy
            (0,0)*+{P(a,a)}="a";
            (25,0)*+{P(a,a)}="b";
            {\ar@/^1.25pc/^{P(id_a,id_a)} "a" ; "b"};
            {\ar@/_1.25pc/_{id_{P(a,a)}} "a" ; "b"};
            {\ar@{=>}^{\phi^P_{aa}} (12.5,2) ; (12.5,-2)};
        \endxy
    \]
Similarly the $2$-cell $\gamma \colon id \Rightarrow wv$ is characterised by the inverses of those that give $\delta$. The $2$-cell $\kappa \colon up \Rightarrow uq$ is characterised by the $2$-cells
    \[
        \xy
            (0,0)*+{P(c,a)}="a";
            (40,0)*+{P(a,a)}="d";
            (20,15)*+{P(b,a)}="b";
            %
            {\ar@/^0.5pc/^{P(g,1)} "a" ; "b"};
            {\ar@/^0.5pc/^{P(f,1)} "b" ; "d"};
            {\ar@/^1pc/|{P(gf, 11)} "a" ; "d"};
            {\ar@/_1pc/|{P(h, 1)} "a" ; "d"};
            {\ar@{=>}^{\phi^P} (20,11) ; (20,7)};
            {\ar@{=>}^{P(\theta,r_1)} (20,2) ; (20,-2)};
            %
        \endxy
    \]
and the $2$-cell $\lambda \colon wr \Rightarrow wq$ is characterised by the $2$-cells
    \[
        \xy
            (0,0)*+{P(c,a)}="a";
            (40,0)*+{P(c,c).}="d";
            (20,15)*+{P(c,b)}="b";
            %
            {\ar@/^0.5pc/^{P(1,f)} "a" ; "b"};
            {\ar@/^0.5pc/^{P(1,g)} "b" ; "d"};
            {\ar@/^1pc/|{P(11,gf)} "a" ; "d"};
            {\ar@/_1pc/|{P(1,h)} "a" ; "d"};
            {\ar@{=>}_{{(\phi^P)}} (20,11) ; (20,7)};
            {\ar@{=>}_{P(l_1,\theta)} (20,2) ; (20,-2)};
            %
        \endxy
    \]
The remaining $2$-cell, $\rho \colon ur \Rightarrow wp$, is characterised by the $2$-cells
    \[
        \xy
            (0,0)*+{P(c,a)}="a";
            (40,0)*+{P(b,b).}="d";
            (20,20)*+{P(c,b)}="b";
            (20,-20)*+{P(b,a)}="c";
            {\ar@/^1pc/^{P(1,f)} "a" ; "b"};
            {\ar@/^1pc/^{P(g,1)} "b" ; "d"};
            {\ar@/_1pc/_{P(g,1)} "a" ; "c"};
            {\ar@/_1pc/_{P(1,f)} "c" ; "d"};
            {\ar@/^2.2pc/|{P(1g,1f)} "a" ; "d"};
            {\ar@/_2.2pc/|{P(g1,f1)} "a" ; "d"};
            {\ar|{P(g,f)} "a" ; "d"};
            {\ar@{=>}^{\phi^P} (20,16) ; (20,12)};
            {\ar@{=>}^{P(r,l)} (20,6) ; (20,2)};
            {\ar@{=>}_{P(l, r)^{-1}} (20,-2) ; (20,-6)};
            {\ar@{=>}_{(\phi^P)^{-1}} (20,-12) ; (20,-16)};
        \endxy
    \]

The following results will characterise bicodescent objects as objects which are universal amongst extrapseudonatural transformations. By this we mean that the morphisms
    \[
        i_a \colon P(a,a) \rightarrow \CodP
    \]
are part of the data for an extrapseudonatural transformation, satisfying universal properties as in Definition \ref{def:epnatcod}.

\begin{lemma}\label{epnatcod}
Let $(Y,y,\upsilon)$ be data as in \ref{def:bicod1} satisfying only the axioms $BC1$ and $BC2$ for the coherence data of a pseudofunctor $P$. Then it is necessary and sufficient for the corresponding $(Y, y_a, \upsilon_f)$ to constitute an extrapseudonatural transformation.
\end{lemma}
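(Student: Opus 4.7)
The plan is to compare the BC axioms component-wise against the extrapseudonatural axioms using the universal property of the coproducts that make up $X_1$, $X_2$, $X_3$. A $1$-cell $y \colon X_1 \to Y$ is exactly a family $y_a \colon P(a,a) \to Y$ indexed by objects of $\m{B}$, and a $2$-cell $\upsilon \colon yu \Rightarrow yw$ is exactly a family of invertible $2$-cells $\upsilon_f \colon y_a \cdot P(f,1) \Rightarrow y_b \cdot P(1,f)$ indexed by $1$-cells $f \colon a \to b$ of $\m{B}$. These are precisely the $1$- and $2$-cell data for an extrapseudonatural transformation from $P$ to the constant pseudofunctor $\Delta_Y$. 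Since the target is constant, Remark~\ref{constant} reduces the axiom list to EP1, EP3, EP6, and the first half of EP5; and EP3 is vacuous because the $\m{A}$-variable is trivial. So the proof reduces to checking that BC1 and BC2 are jointly equivalent to EP1, EP6, and the first half of EP5 on the components $\upsilon_f$.

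Next I handle BC2 against EP5. Unwinding the definitions of $\delta$ and $\gamma$ as $\phi^P_{aa}$ (and its inverse), the pasting equality BC2 evaluated at the component $P(a,a) \hookrightarrow X_1$ becomes exactly the assertion that $\upsilon_{id_a}$ is the identity $2$-cell on $y_a \cdot P(id_a, id_a) \cong y_a$ after absorbing the pseudofunctor coherence for identities. This is precisely the first part of EP5, and the equivalence is immediate in both directions.

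The main work, and the main obstacle, is matching BC1 with EP1 together with EP6. The third coproduct $X_3$ is indexed by tuples $(f,g,h,\theta)$ with $\theta \colon g\cdot f \Rightarrow h$, and the $2$-cells $\kappa$, $\lambda$, $\rho$ are built from $P(\theta, r_1)$, $P(l_1,\theta)$, and $P(g,f)$ respectively, together with instances of $\phi^P$. Component-wise, BC1 at the summand $(f,g,h,\theta)$ therefore asserts a single compatibility among $\upsilon_f$, $\upsilon_g$, $\upsilon_h$ and the image of $\theta$ under $P$. Specialising $h = g\cdot f$ with $\theta$ the identity coherence recovers EP1 at the pair $(f,g)$, while specialising $f = id$ turns $\theta$ into a $2$-cell between parallel $1$-cells and recovers EP6. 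Conversely, a general instance of BC1 is obtained by composing the EP1 instance at $(f,g)$ with the EP6 instance at $\theta \colon gf \Rightarrow h$, the intermediate coherence cells produced by this composition cancelling against the $\phi^P$'s appearing in $\kappa$, $\lambda$, $\rho$.

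The only nontrivial bookkeeping is tracking these pseudofunctor coherence cells and the bicategorical unitors $r$, $l$ that appear in the formulas for $\delta$, $\gamma$, $\kappa$, $\lambda$, $\rho$. Once one writes the BC1 pasting at a fixed $(f,g,h,\theta)$ and the EP1–EP6 pasting side by side, the match follows from naturality of $\phi^P$ with respect to $\theta$ and from the unit axioms of $P$, so no further content beyond this matching is required.
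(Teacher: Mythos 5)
Your matching of BC1 with EP1 and EP6 (via the choice of 2-cell indexing the $X_3$ summand: the identity $g\cdot f\Rightarrow gf$ for EP1, and $\theta\cdot r_g\colon g\cdot id_a\Rightarrow g'$ for EP6) and of BC2 with the first half of EP5 (via $\gamma=\delta^{-1}$ and the identification of $\upsilon_{id_a}$ with an identity) is exactly the paper's argument, including the observation that the general BC1 instance is recovered by composing an EP1 instance with an EP6 instance. The paper does add one refinement you elide: to extract EP6 from BC1 one first verifies EP6 for the unitor $r_g$ itself (so that $g\cdot id_a$ can be traded for $g$), and only then treats a general $\theta\colon g\Rightarrow g'$ by feeding $\theta\cdot r_g$ into BC1.

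There is, however, a genuine gap in your sufficiency direction: the claim that ``EP3 is vacuous because the $\m{A}$-variable is trivial'' is false, and it hides missing data. Viewing $P\colon\m{B}^{op}\times\m{B}\to\m{C}$ as a pseudofunctor out of $\mathbf{1}\times\m{B}^{op}\times\m{B}$, the definition of extrapseudonatural transformation requires, for each $b$, a \emph{pseudonatural} transformation $P(-,b,b)\Rightarrow\Delta_Y$. Since $P$ is only a pseudofunctor, $P(-,b,b)$ sends the identity of $\mathbf{1}$ to $P(1_b,1_b)$, which is merely isomorphic to $id_{P(b,b)}$ via $\phi^P_{bb}$; so this pseudonatural transformation carries a nontrivial 2-cell component at the identity (the paper's $\overline{j}_b$, built from $\phi^P_{bb}$ and the unitors), which you never construct, and whose axioms PS1--PS3 must be verified. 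Consequently EP3 (and EP4) are genuine compatibility conditions between $\overline{j}_b$ and the $\upsilon_f$: the paper displays the relevant pasting equality and checks it using naturality of coherence cells, the unit axioms for $P$, and the triangle identities. These checks do not use BC1 or BC2, so your proposed equivalence ``BC1 and BC2 $\Leftrightarrow$ EP1, EP6, EP5(i)'' is correct as far as it goes, but it does not by itself establish that $(Y,y_a,\upsilon_f)$ constitutes an extrapseudonatural transformation; you must supply $\overline{j}_b$ and carry out the (routine but non-vacuous) verification of PS1--PS3, EP3, and EP4.
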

\begin{proof}
Collectively the $(Y, y_a, \upsilon_f)$ give a triple $(Y, y, \upsilon)$. The axiom BC1 then follows from the axioms EP1, to change an $\upsilon_{gf}$ into a composite of $\upsilon_g$ and $\upsilon_f$, and EP6 for the modification-like property. For BC2 we see that one side of the pasting diagram corresponds to $\upsilon_{id_a} = id_{y_a \cdot P_{aa}}$ by EP5, whilst the other side is a composite of $\delta$ and $\gamma = \delta^{-1}$, giving the identity required.

Now we will show that the $1$-cells
    \[
        y_b \colon P(b,b) \rightarrow Y
    \]
along with $2$-cells, to be described, constitute an extrapseudonatural transformation. As $P$ is a pseudofunctor out of $\m{B}^{op} \times \m{B}$ then we require pseudonatural transformations between the pseudofunctor
    \begin{align*}
        \overline{\Delta}_{P(b,b)} \colon &\mathbf{1} \longrightarrow \m{C} \\
        &\cdot \longmapsto P(b,b) \\
        &1_{\cdot} \longmapsto P(1_b,1_b) \\
        &1_{1_{\cdot}} \longmapsto id_{P(1_b,1_b)}
    \end{align*}
and the constant pseudofunctor $\Delta_Y$. The $2$-cell components at the identity are given by
    \[
        \xy
            (0,0)*+{P(b,b)}="a";
            (25,0)*+{P(b,b)}="b";
            (0,-20)*+{Y}="c";
            (25,-20)*+{Y}="d";
            {\ar@/^1.5pc/^{P(1_b,1_b)} "a" ; "b"};
            {\ar_{id} "a" ; "b"};
            {\ar^{y_b} "b" ; "d"};
            {\ar_{y_b} "a" ; "c"};
            {\ar_{id} "c" ; "d"};
            {\ar|{y_b} "a" ; "d"};
            {\ar@{=>}^{\phi_{bb}} (12.5,5) ; (12.5,1)};
            {\ar@{=>}^{r_{y_b}} (18,-4) ; (14,-7)};
            {\ar@{=>}^{l_{y_b}^{-1}} (10,-11) ; (6,-14)}
        \endxy
    \]
which we will denote by $\overline{j}_{b}$.
Using the fact that in any bicategory the left and right unitors at the identity are equal, along with the naturality of many of the coherence cells we can see that this constitutes a pseudonatural transformation. The first collection of extrapseudonatural $2$-cells are given by
    \[
        \xy
            (0,0)*+{P(b,a)}="a";
            (25,0)*+{P(a,a)}="b";
            (0,-20)*+{P(b,b)}="c";
            (25,-20)*+{Y}="d";
            {\ar^{P(f,1)} "a" ; "b"};
            {\ar^{y_a} "b" ; "d"};
            {\ar_{P(1,f)} "a" ; "c"};
            {\ar_{y_b} "c" ; "d"};
            {\ar@{=>}^{\upsilon_f} (12.5,-8) ; (12.5,-12)};
        \endxy
    \]
where $\upsilon$ is the $2$-cell given in the triple. The second collection of extrapseudonatural $2$-cells is given by the identity on the $1$-cell $id_{Y} \cdot y_b$.

The axiom EP1 holds by an instance of BC1 using the identity $2$-cell $id \colon g \cdot f \Rightarrow gf$, whilst EP2 holds trivially. The third axiom, EP3, requires an equality of the following two pasting diagrams.
                    \[
                        \xy
                            (55,0)*+{P(b,a)}="a";
                            (70,-7)*+{P(b,a)}="b";
                            (70,-35)*+{P(a,a)}="c";
                            (55,-42)*+{Y}="d";
                            (40,-7)*+{P(b,b)}="e";
                            (40,-35)*+{Y}="f";
                            (55,-14)*+{P(b,b)}="g";
                            {\ar^{P(1,1)} "a" ; "b"};
                            {\ar^{P(f,1)} "b" ; "c"};
                            {\ar^{j_a} "c" ; "d"};
                            {\ar_{P(1,f)} "a" ; "e"};
                            {\ar_{j_b} "e" ; "f"};
                            {\ar_{id_Y} "f" ; "d"};
                            {\ar^{P(1,f)} "b" ; "g"};
                            {\ar|{j_b} "g" ; "d"};
                            {\ar_{P(1,1)} "e" ; "g"};
                            (55,-7)*+{\cong};
                            {\ar@{=>}^{j_f} (62.5,-22) ; (58.5,-25)};
                            {\ar@{=>}^{\overline{j}_{b}} (47.5,-22) ; (43.5,-25)};
                            (0,0)*+{P(b,a)}="1";
                            (15,-7)*+{P(b,a)}="2";
                            (15,-35)*+{P(a,a)}="3";
                            (0,-42)*+{Y}="4";
                            (-15,-7)*+{P(b,b)}="5";
                            (-15,-35)*+{Y}="6";
                            (0,-28)*+{P(a,a)}="7";
                            {\ar^{P(1,1)} "1" ; "2"};
                            {\ar^{P(f,1)} "2" ; "3"};
                            {\ar^{j_a} "3" ; "4"};
                            {\ar_{P(1,f)} "1" ; "5"};
                            {\ar_{j_b} "5" ; "6"};
                            {\ar_{id_Y} "6" ; "4"};
                            {\ar|{P(f,1)} "1" ; "7"};
                            {\ar^{P(1,1)} "7" ; "3"};
                            {\ar_{j_a} "7" ; "6"};
                            (7.5,-18.5)*+{\cong};
                            {\ar@{=>}^{j_f} (-7.5,-17) ; (-11.5,-20)};
                            {\ar@{=>}^{\overline{j_a}} (1,-33.5) ; (-3,-36.5)};
                            %
                        \endxy
                    \]
Written out as a commutative diagram of $2$-cells, including all coherence cells, this can plainly be seen to hold as a result of naturality of various coherence $2$-cells, unit axioms for $P$, and triangle identities in $\m{C}$. The fourth axiom, EP4, follows by a similar, though simpler, argument, whilst EP5 holds immediately due to $\delta$ and $\gamma$ being inverse to each other, giving
    \[
        \chi_{id_a} = id_{j_a \cdot P(1_a,1_a)}.
    \]
Clearly axiom EP7 holds since we are considering a constant pseudofunctor $\Delta_Y$. It remains then to check EP6 which requires, for each $\theta \colon g \Rightarrow g'$ between $g,g' \colon a \rightarrow b$ in $\m{B}$, an equality of pasting diagrams as follows.
 \[
                \xy
                    (0,0)*+{P(b,a)}="a";
                    (30,0)*+{P(a,a)}="b";
                    (0,-18)*+{P(b,b)}="c";
                    (30,-18)*+{Y}="d";
                    {\ar_{P(g',1)} "a" ; "b"};
                    {\ar_{P(1,g)} "a" ; "c"};
                    {\ar^{y_a} "b" ; "d"};
                    {\ar_{y_b} "c" ; "d"};
                    {\ar@/^2pc/^{P(g,1)} "a" ; "b"};
                    {\ar@{=>}^{\upsilon_{g'}} (14,-7) ; (14,-11)};
                    {\ar@{=>}^{P(\theta,1)} (12, 6) ; (12, 2)};
                    (70,0)*+{P(a,b)}="1";
                    (100,0)*+{P(a,a)}="2";
                    (70,-18)*+{P(b,b)}="3";
                    (100,-18)*+{Y}="4";
                    {\ar^{P(g,1)} "1" ; "2"};
                    {\ar@/^0.5pc/^{P(1,g)} "1" ; "3"};
                    {\ar^{y_{a}} "2" ; "4"};
                    {\ar_{y_{b}} "3" ; "4"};
                    {\ar@/_2pc/_{P(1,g')} "1" ; "3"};
                    {\ar@{=>}^{\upsilon_{g}} (87,-7) ; (87,-11)};
                    {\ar@{=>}_{P(1,\theta)} (68.5,-10) ; (64.5,-10)};
                    {\ar@{=} (45,-9) ; (48,-9)};
                \endxy
            \]
This is slightly tricky to prove but really relies on making a suitable choice of $2$-cell in $\m{B}$ when considering BC1. First we can check that EP6 holds for the $2$-cell $r_g \colon g \cdot id_a \Rightarrow g$, this relies on the fact that many of the pseudofunctor coherence cells for $P$, and the image of some unitors, can be cancelled in the resulting diagrams. If we then have a $2$-cell $\theta \colon g \Rightarrow g'$ then choosing the $2$-cell $\gamma \cdot r_g$
    \[
        \xy
            (0,0)*+{a}="a";
            (15,15)*+{a}="b";
            (30,0)*+{b}="c";
            {\ar@/^0.5pc/^{id_a} "a" ; "b"};
            {\ar@/^0.5pc/^{g} "b" ; "c"};
            {\ar@/^0.5pc/|{g} "a" ; "c"};
            {\ar@/_1.5pc/_{g'} "a" ; "c"};
            {\ar@{=>}^{r_g} (15,10) ; (15,6)};
            {\ar@{=>}^{\theta} (15,0) ; (15,-4)};
        \endxy
    \]
in an instance of BC1 proves that EP6 holds in all cases.
\end{proof}

We now describe the bicoend of $P$ as the universal extrapseudonatural transformation out of $P$, which we will later use as our definition of bicodescent object.

\begin{definition}\label{def:epnatcod}
Let $P \colon \m{B}^{op} \times \m{B} \rightarrow \m{C}$ be a pseudofunctor. The bicoend of $P$ is given by $i \colon P \carrow \int^{b} P(b,b)$ satisfying the following universal properties:
    \begin{itemize}
        \item EB1 Given another object $X$ with an extrapseudonatural transformation $j \colon P \carrow X$, there is a $1$-cell $\tilde{j} \colon \int^{b} P(b,b) \rightarrow X$ and isomorphisms $J_a \colon \tilde{j} \cdot i_a \cong j_a$ such that the following equality of pasting diagrams holds.
                \[
                    \xy
                        (0,0)*{P_{ba}}="a";
                        (20,0)*+{P_{aa}}="b";
                        (40,0)*+{\codP}="c";
                        (10,-12)*+{P_{bb}}="d";
                        (30,-12)*{Y}="e";
                        {\ar^{P_{f1}} "a" ; "b"};
                        {\ar^{i_a} "b" ; "c"};
                        {\ar^{\tilde{j}} "c" ; "e"};
                        {\ar_{j_b} "d" ; "e"};
                        {\ar|{j_a} "b" ; "e"};
                        {\ar_{P_{1f}} "a" ; "d"};
                        {\ar@{=>}^{J_a} (30,-3) ; (30,-7)};
                        {\ar@{=>}^{j_f} (12.5,-4) ; (15,-8)};
                        (60,-6)*{P_{bb}}="a";
                        (60,6)*+{P_{ba}}="b";
                        (85,6)*+{P_{aa}}="c";
                        (72.5,-18)*+{Y}="d";
                        (85,-6)*{\codP}="e";
                        {\ar^{P_{f1}} "b" ; "c"};
                        {\ar_{P_{1f}} "b" ; "a"};
                        {\ar^{i_a} "c" ; "e"};
                        {\ar|{i_b} "a" ; "e"};
                        {\ar^{\tilde{j}} "e" ; "d"};
                        {\ar_{j_b} "a" ; "d"};
                        {\ar@{=>}^{i_f} (72.5,2) ; (72.5,-2)};
                        {\ar@{=>}^{J_b} (72.5,-9) ; (72.5,-13)};
                        {\ar@{=} (48,-6) ; (52,-6)};
                    \endxy
                \]
        \item EB2 Given two $1$-cells $h$, $k \colon \codP \rightarrow Y$ and $2$-cells $\Gamma_a \colon h \cdot i_a \Rightarrow k \cdot i_a$ satisfying
                 \[
                    \xy
                    (0,0)*+{P_{ba}}="a";
                    (12,12)*+{P_{aa}}="b";
                    (24,24)*+{\codP}="c";
                    (24,0)*+{\codP}="d";
                    (36,12)*+{Y}="e";
                    (12,-12)*+{P_{bb}}="f";
                    {\ar^{P_{f1}} "a" ; "b"};
                    {\ar^{i_a} "b" ; "c"};
                    {\ar^{h} "c" ; "e"};
                    {\ar_{P_{1f}} "a" ; "f"};
                    {\ar_{i_b} "f" ; "d"};
                    {\ar_{k} "d" ; "e"};
                    {\ar|{i_a} "b" ; "d"};
                    {\ar@{=>}^{\Gamma_a} (24,14) ; (24,10)};
                    {\ar@{=>}^{i_f} (12,2) ; (12,-2)};
                    (50,12)*+{P_{ba}}="1";
                    (62,24)*+{P_{aa}}="2";
                    (74,12)*+{\codP}="3";
                    (74,-12)*+{\codP}="4";
                    (86,0)*+{Y}="5";
                    (62,0)*+{P_{bb}}="6";
                    {\ar^{P_{f1}} "1" ; "2"};
                    {\ar^{i_a} "2" ; "3"};
                    {\ar^{h} "3" ; "5"};
                    {\ar_{P_{1f}} "1" ; "6"};
                    {\ar_{i_b} "6" ; "4"};
                    {\ar_{k} "4" ; "5"};
                    {\ar|{i_b} "6" ; "3"};
                    {\ar@{=>}^{i_f} (62,14) ; (62,10)};
                    {\ar@{=>}^{\Gamma_b} (74,2) ; (74,-2)};
                    {\ar@{=} (41,0) ; (45,0)};
                    \endxy
                \]
 there is a unique $2$-cell $\gamma \colon h \Rightarrow k$ such that $\Gamma_a = \gamma \ast 1_{i_a}$ for all $a \in \m{A}$.
    \end{itemize}
 \end{definition}
\begin{lemma}
Let $P \colon \m{B}^{op} \times \m{B} \rightarrow \m{C}$ be a pseudofunctor and suppose that $i \colon P \carrow \int^{b} P(b,b)$ exists. Let $j \colon P \carrow X$ be another extrapseudonatural transformation which also satisfies the axioms EB1 and EB2. Then there is an adjoint equivalence between $\int^{b} P(b,b)$ and $X$.
\end{lemma}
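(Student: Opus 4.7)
The plan is to follow the standard argument that universal objects are unique up to equivalence, using the one-dimensional universal property EB1 to produce the equivalence 1-cells and the two-dimensional property EB2 to construct, and to verify invertibility of, the unit and counit.

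First, applying EB1 for $i$ to the extrapseudonatural transformation $j$ produces a 1-cell $\tilde{j} \colon \codP \rightarrow X$ together with isomorphisms $J_a \colon \tilde{j} \cdot i_a \cong j_a$; symmetrically, applying EB1 for $j$ to $i$ produces $\tilde{i} \colon X \rightarrow \codP$ with isomorphisms $I_a \colon \tilde{i} \cdot j_a \cong i_a$. To produce a 2-cell $\eta \colon id_X \Rightarrow \tilde{j}\tilde{i}$, for each $a$ I would form the composite isomorphism
\[
    \Gamma_a \colon id_X \cdot j_a \cong j_a \cong \tilde{j} \cdot i_a \cong \tilde{j} \cdot (\tilde{i} \cdot j_a) \cong (\tilde{j}\tilde{i}) \cdot j_a
\]
built from a unitor, $J_a^{-1}$, the whiskering $1 \ast I_a^{-1}$, and an associator, and then check that the family $(\Gamma_a)_a$ satisfies the compatibility hypothesis of EB2 for $j$; this reduces, after cancelling associators and unitors, to the two pasting-diagram equalities defining $\tilde{j}$ and $\tilde{i}$ in EB1. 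Applying EB2 for $j$ then yields a unique 2-cell $\eta \colon id_X \Rightarrow \tilde{j}\tilde{i}$ with $\eta \ast 1_{j_a} = \Gamma_a$. A mirror construction using EB2 for $i$ produces $\varepsilon \colon \tilde{i}\tilde{j} \Rightarrow id_{\codP}$.

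To see that $\eta$ and $\varepsilon$ are invertible, I would build candidate inverses by running the above construction with reversed isomorphisms, and then apply the uniqueness clause of EB2 to the identity families on $j_a$ and $i_a$ to conclude that the round-trip composites agree with the identity 2-cells on $id_X$ and $id_{\codP}$. The resulting quadruple $(\tilde{j}, \tilde{i}, \eta, \varepsilon)$ is then an equivalence, which can be replaced by an adjoint equivalence by the standard bicategorical result of \cite{Gur12}. The main obstacle is the verification that the $\Gamma_a$ satisfy the compatibility hypothesis of EB2: although conceptually routine, it is a lengthy bookkeeping exercise chasing $J_a$, $I_a$, the associators and unitors of $\m{C}$, and the pasting equalities from EB1 that relate these isomorphisms to the cells $i_f$ and $j_f$.
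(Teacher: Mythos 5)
Your proposal is correct and follows essentially the same route as the paper: the 1-cells come from EB1 applied in both directions, the unit and counit are induced by EB2 from composites of the isomorphisms $J_a$ and $I_a$, and the uniqueness clause of EB2 does the remaining verification. The only cosmetic difference is that the paper uses that uniqueness to check the triangle identities directly rather than invoking the general improvement of an equivalence to an adjoint equivalence.
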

\begin{proof}
Given two such objects as described it is simple to see that there are appropriate unit and counit isomorphisms $\tilde{i} \cdot \tilde{j} \cong 1_{\codP}$ and $\tilde{j} \cdot \tilde{i} \cong 1_X$ induced by axiom EB2 above by reliance on the conditions in axiom EB1. Since the induced $2$-cells are unique it is then a simple check to see that the triangle inequalities hold and there is an adjoint equivalence between $\CodP$ and $X$.
\end{proof}

In the sense of the above lemma we can consider bicodescent objects to be essentially unique.
\begin{proposition}
Let $P \colon \m{B}^{op} \times \m{B} \rightarrow \m{C}$ be a pseudofunctor. The bicodescent object corresponding to the coherence data for $P$ is equivalent to the bicoend of $P$.
\end{proposition}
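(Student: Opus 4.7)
The plan is to show that a bicodescent object and a bicoend are universal objects satisfying the same universal property (just phrased differently), and then invoke the essential-uniqueness lemma immediately preceding this proposition to conclude the equivalence. Concretely, I will translate the BC1--BC4 axioms of Definition \ref{def:bicod1} into the axioms for an extrapseudonatural transformation with the EB1--EB2 universal properties, and vice versa.

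First, suppose $(X, x, \chi)$ is the bicodescent object of the coherence data associated with $P$. By Lemma \ref{epnatcod}, the 1-cells $x_a = x \cdot I_a$ and the 2-cells extracted from $\chi$ on each component indexed by $f$ assemble into an extrapseudonatural transformation $x \colon P \carrow X$, precisely because the triple $(X, x, \chi)$ satisfies BC1 and BC2. I then verify EB1: given another extrapseudonatural transformation $j \colon P \carrow Y$, Lemma \ref{epnatcod} packages it back as a triple $(Y, y, \upsilon)$ satisfying BC1 and BC2, whereupon BC3 of the bicodescent object produces the required $1$-cell $\tilde j \colon X \to Y$ and isomorphism $\zeta \colon \tilde j \cdot x \Rightarrow y$; unpacking $\zeta$ into its components $J_a \colon \tilde j \cdot x_a \Rightarrow j_a$ and using the compatibility clause of BC3 yields exactly the pasting equality demanded by EB1. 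For EB2, given $h, k \colon X \to Y$ and $2$-cells $\Gamma_a \colon h \cdot x_a \Rightarrow k \cdot x_a$ satisfying the compatibility of EB2, I take the whiskered composites against the coproduct inclusions $I_a$ to obtain a single $2$-cell $\beta \colon h \cdot x \Rightarrow k \cdot x$ whose compatibility with $\chi$ follows from the hypothesis on the $\Gamma_a$; then BC4 produces the unique $\gamma \colon h \Rightarrow k$ with $\gamma \ast 1_x = \beta$, and the uniqueness of $\gamma$ together with its defining property yields $\Gamma_a = \gamma \ast 1_{x_a}$ on each component.

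Conversely, given the bicoend $i \colon P \carrow \int^b P(b,b)$ satisfying EB1 and EB2, I apply the necessity direction of Lemma \ref{epnatcod} to obtain a triple $\bigl(\int^b P(b,b), i, \iota\bigr)$ satisfying BC1 and BC2. Axiom BC3 is verified by starting from arbitrary data $(Y, y, \upsilon)$ satisfying BC1 and BC2, converting it via Lemma \ref{epnatcod} to an extrapseudonatural transformation, applying EB1, and then repackaging. Axiom BC4 is verified from EB2 by reversing the whiskering argument above: a $2$-cell $\beta \colon h \cdot x \Rightarrow k \cdot x$ satisfying the BC4 compatibility restricts along the coproduct inclusions to give a family $\Gamma_a$ satisfying the EB2 compatibility, and the $\gamma$ produced by EB2 satisfies $\gamma \ast 1_x = \beta$ by the universal property of the coproduct in the base.

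At this point the bicodescent object and the bicoend are each equipped with an extrapseudonatural transformation out of $P$ satisfying EB1 and EB2, so the previous lemma furnishes an adjoint equivalence between them. The main delicate point, and the only step that is more than a translation, is checking that the BC3/BC4 compatibility pasting diagrams correspond exactly, after the Lemma \ref{epnatcod} dictionary, to the EB1/EB2 pasting diagrams; this is essentially a matter of unpacking the definitions of $u$, $w$, $\chi$, and the coproduct inclusions $I_a$, so it is routine but cumbersome rather than conceptually difficult.
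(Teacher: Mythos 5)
Your proposal is correct and follows essentially the same route as the paper: both use Lemma \ref{epnatcod} to translate between the triple $(X,x,\chi)$ satisfying BC1--BC4 and an extrapseudonatural transformation satisfying EB1--EB2, and both conclude by the uniqueness-up-to-adjoint-equivalence of objects satisfying EB1--EB2 (the paper spells this out by constructing the $1$-cells $s$ and $t$ from the two universal properties and the isomorphisms $st \cong 1$, $ts \cong 1$, which is exactly the content of the preceding lemma you invoke).
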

\begin{proof}
By Lemma \ref{epnatcod} the triple of a bicodescent object for $P$, $(\CodP, x, \chi)$, provides an extrapseudonatural transformation $x \colon P \carrow \CodP$. It is clear that $\CodP$ satisfies the same universal properties as $\codP$. Similarly the extrapseudonatural transformation $i$ from $P$ to $\Delta_{\codP}$ satisfies all of the axioms of a bicodescent object for $P$, including the universal properties. We will describe how to show that these objects are equivalent.

The bicoend has invertible $2$-cells $i_f \colon i_a \cdot P_{fa} \Rightarrow i_b \cdot P_{bf}$, where $f \colon a \rightarrow b$ is a $1$-cell in $\m{B}$. Collectively this family of $2$-cells corresponds to a $2$-cell $i$.
    \[
        \xy
            (0,0)*+{\coprod_f P_{ba}}="a";
            (25,0)*+{\coprod_{a} P_{aa}}="b";
            (0,-20)*+{\coprod_{a} P_{aa}}="c";
            (25,-20)*+{\int^{b}P_{bb}}="d";
            {\ar^{u} "a" ; "b"};
            {\ar^{j} "b" ; "d"};
            {\ar_{w} "a" ; "c"};
            {\ar_{j} "c" ; "d"};
            {\ar@{=>}^{i} (12.5,-8) ; (12.5,-12)};
        \endxy
    \]
Each of the $1$-cells in the above diagram are induced using the universal properties of the displayed coproducts. The conditions in BC3 are met which induces a $2$-cell as below.
    \[
        \xy
            (0,0)*+{\CodP}="a";
            (25,0)*+{\coprod_{a} P_{aa}}="b";
            (0,-20)*+{\int^{a} P_{aa}}="c";
            {\ar_{j} "b" ; "a"};
            {\ar_{s} "a" ; "c"};
            {\ar^{j} "b" ; "c"};
            (10,-6)*+{\cong};
        \endxy
    \]

Similarly the bicodescent object $\CodP$ is an extrapseudonatural transformation, as previously described, satisfying the appropriate axioms. Since $\int^{a}P_{aa}$ is the universal such transformation there is an induced $1$-cell $t \colon \int^{a}P_{aa} \rightarrow \CodP$ satisfying the properties described in the previous definition. Our claim now is that $s$ and $t$ form an equivalence in $\m{B}$.

It is simple to check this claim. In analogous $1$-dimensional cases this would be immediate following from the uniqueness inherent in the $1$-dimensional universal property. Since the $1$-dimensional properties now no longer contain a uniqueness statement, we do not find that $s$ and $t$ are inverses but that we instead obtain isomorphisms $1 \cong st$ and $ts \cong 1$, as in the diagrams below.

    \[
        \xy
            (0,0)*+{\CodP}="a";
            (0,-20)*+{\int^{a} P_{aa}}="b";
            (0,-40)*+{\CodP}="c";
            (25,-20)*+{\coprod_{a \in \m{B}} P_{aa}}="d";
            {\ar_{x} "d" ; "a"};
            {\ar|{j} "d" ; "b"};
            {\ar^{x} "d" ; "c"};
            {\ar_{s} "a" ; "b"};
            {\ar_{t} "b" ; "c"};
            {\ar@/_3.5pc/_{id} "a" ; "c"};
            (9,-14)*+{\cong};
            (9,-26)*+{\cong};
            (-10,-20)*+{\cong};
        \endxy
    \]
    \[
        \xy
            (0,-20)*+{\CodP}="b";
            (0,0)*+{\int^{a} P_{aa}}="a";
            (0,-40)*+{\int^{a} P_{aa}}="c";
            (25,-20)*+{\coprod_{a \in \m{B}} P_{aa}}="d";
            {\ar_{i_a} "d" ; "a"};
            {\ar|{x_a} "d" ; "b"};
            {\ar^{i_a} "d" ; "c"};
            {\ar_{t} "a" ; "b"};
            {\ar_{s} "b" ; "c"};
            {\ar@/_3.5pc/_{id} "a" ; "c"};
            (9,-14)*+{\cong};
            (9,-26)*+{\cong};
            (-10,-20)*+{\cong};
        \endxy
    \]
These isomorphisms can then be used to show that the two objects $\CodP$ and $\int^{a}P_{aa}$ are equivalent in $\m{B}$.
\end{proof}
We will use the notation $i \colon P \carrow \int^{b} P(b,b)$ to refer to the bicodescent object corresponding to a pseudofunctor $P$.

\begin{lemma}\label{psfunctor}
 Let $P \colon \m{A} \times \m{B}^{op} \times  \m{B} \rightarrow \m{C}$ be a pseudofunctor. Assume that, for each $a \in \m{A}$, the bicodescent object
    \[
         j^{a} \colon P_{a--} \carrow \int^{b} P(a,b,b)
    \]
 exists in $\m{C}$. Then
    \[
        a \longmapsto \int^{b} P(a,b,b)
    \]
 is the object part of a pseudofunctor
    \[
        \int^{b} P(-,b,b) \colon \m{A} \rightarrow \m{C}.
    \]
 \end{lemma}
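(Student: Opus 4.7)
The plan is to construct the pseudofunctor $\int^b P(-,b,b)$ using the universal properties EB1 and EB2 of each bicodescent object, following the analogue of the standard one-dimensional argument that a coend in one variable is functorial in the other.

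On objects, set $a \mapsto \int^b P(a,b,b)$. For a $1$-cell $f \colon a \rightarrow a'$ in $\m{A}$, observe that $P(f,1,1)$ yields a pseudonatural transformation $P(a,-,-) \Rightarrow P(a',-,-)$ of pseudofunctors $\m{B}^{op} \times \m{B} \rightarrow \m{C}$, arising from the pseudofunctoriality of $P$ on the first coordinate. Composing this with the extrapseudonatural transformation $j^{a'} \colon P(a',-,-) \carrow \int^b P(a',b,b)$ via Lemma \ref{comp1} yields an extrapseudonatural transformation $P(a,-,-) \carrow \int^b P(a',b,b)$. Axiom EB1 for the bicodescent object $\int^b P(a,b,b)$ then induces a $1$-cell which I will denote
\[
    \widehat{P}f \colon \int^b P(a,b,b) \longrightarrow \int^b P(a',b,b),
\]
together with invertible $2$-cells $J^f_b \colon \widehat{P}f \cdot j^a_b \Rightarrow j^{a'}_b \cdot P(f,1,1)$ witnessing compatibility with the injections. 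For a $2$-cell $\alpha \colon f \Rightarrow f'$ in $\m{A}$, the whiskered cells $1_{j^{a'}_b} \ast P(\alpha,1,1)$ assemble, via naturality of the pseudonatural coherence in $\beta$ from the composition lemma, into a family satisfying the compatibility condition in EB2; the resulting unique induced $2$-cell is $\widehat{P}\alpha \colon \widehat{P}f \Rightarrow \widehat{P}f'$.

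For the coherence $2$-cells, given composable $f \colon a \rightarrow a'$ and $g \colon a' \rightarrow a''$, both $\widehat{P}g \cdot \widehat{P}f$ and $\widehat{P}(g \cdot f)$ come equipped with canonical isomorphisms to $j^{a''}_b \cdot P(gf,1,1)$ (the first via pasting $J^g_b$ and $J^f_b$ together with the coherence $\phi^P$ for composition in the first coordinate, the second via $J^{gf}_b$). Applying the uniqueness in EB2 to the resulting family yields the required invertible $2$-cell $\phi_{g,f} \colon \widehat{P}g \cdot \widehat{P}f \Rightarrow \widehat{P}(gf)$. Similarly, the cell $\phi^P_a \colon P(1_a,1,1) \Rightarrow id$ together with unitors in $\m{C}$ produces a family of $2$-cells with target $j^a_b$ that, by EB2, is induced by a unique $\phi_a \colon \widehat{P}(1_a) \Rightarrow id_{\int^b P(a,b,b)}$.

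The remaining task is to verify the pseudofunctor axioms: associativity of $\phi_{g,f}$, the two unit axioms, and functoriality/naturality of $\widehat{P}$ on $2$-cells. Here the uniqueness clause in EB2 is the essential tool: each axiom is an equality of $2$-cells between parallel $1$-cells with domain $\int^b P(a,b,b)$, and it suffices to check the equality after whiskering by every $j^a_b$. After such whiskering, all the constructed cells reduce to pasting composites of $J$-cells and coherence data for $P$, so the required equalities follow directly from the pseudofunctor axioms of $P$ in its first variable and from the axioms of the bicategory $\m{C}$. The main obstacle is purely bookkeeping: setting up the whiskered diagrams carefully enough that each instance of EB2-uniqueness applies to a family that genuinely satisfies the compatibility condition of EB2; once this is in place, no further calculation beyond invoking the pseudofunctor axioms for $P$ is required.
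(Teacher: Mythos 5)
Your proposal is correct and follows essentially the same route as the paper: a $1$-cell $f$ gives a pseudonatural transformation $P_{f--}$, which composed with $j^{a'}$ via Lemma \ref{comp1} yields an extrapseudonatural transformation inducing $\widehat{P}f$ and the cells $J^f_b$ by the one-dimensional universal property, with the coherence data and all pseudofunctor axioms then obtained from the uniqueness clause of the two-dimensional universal property after whiskering by the $j^a_b$. Your treatment of the action on $2$-cells of $\m{A}$ is slightly more explicit than the paper's, but it is the same argument.
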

 \begin{proof}
Each $f \colon a \rightarrow a'$ in $\m{A}$ gives a pseudonatural transformation
    \[
        P_{f--} \colon P_{a--} \Rightarrow P_{a'--}.
    \]
By Lemma \ref{comp1} there is then an extrapseudonatural transformation
    \[
        j^{a'} \cdot P_{f--} \colon P_{a--} \carrow \int^{b} P_{abb},
    \]
inducing the following invertible $2$-cells.
    \[
        \xy
            (0,0)*+{P_{abb}}="a";
            (25,0)*+{P_{a'bb}}="b";
            (0,-20)*+{\int^{b} P_{abb}}="c";
            (25,-20)*+{\int^{b}P_{a'bb}}="d";
            {\ar^{P_{fbb}} "a" ; "b"};
            {\ar^{j^{a'}_b} "b" ; "d"};
            {\ar_{j^a_b} "a" ; "c"};
            {\ar_{\int^{b} P_{fbb}} "c" ; "d"};
            {\ar@{=>}^{j^f_b} (12.5,-8) ; (12.5,-12)};
        \endxy
    \]
The coherence cells of $P$ along with these $j^f_b$ induce coherence cells for these new $1$-cells. This can be seen more clearly in the remarks following the proof. The uniqueness in the $2$-dimensional universal property of each $\int^{b} P_{abb}$ shows that each of the axioms for a pseudofunctor are satisfied. Furthermore, the above $2$-cells constitute pseudonatural transformations $j_b \colon P_{-bb} \Rightarrow \int^{b} P_{-bb}$.
 \end{proof}

 For reference, we will describe the properties of the coherence cells for the pseudofunctors $\int^{b} P_{-bb} \colon \m{A} \rightarrow \m{C}$. The inverse of the invertible $2$-cell
    \[
        \xy
            (0,0)*+{\int^{b} P_{abb}}="a";
            (20,15)*+{\int^{b} P_{a'bb}}="b";
            (40,0)*+{\int^{b} P_{a''bb}}="c";
            {\ar@/^1pc/^{\int^{b} P_{fbb}} "a" ; "b"};
            {\ar@/^1pc/^{\int^{b} P_{f'bb}} "b" ; "c"};
            {\ar_{\int^{b} P_{(f'f)bb}} "a" ; "c"};
            {\ar@{=>}^{\phi_{f',f}} (18,9.5) ; (18,5.5)};
        \endxy
    \]
induced by the $2$-dimensional universal property of $\int^{b} P_{abb}$, upon being whiskered by $j^{a}_b$, yields the invertible pasting diagram below.
    \[
        \xy
            (0,0)*+{P_{abb}}="a";
            (60,0)*+{\int^{b} P_{abb}}="b";
            (60,-50)*+{\int^{b} P_{a''bb}}="c";
            (0,-50)*+{\int^{b} P_{abb}}="d";
            (30,-50)*+{\int^{b} P_{a'bb}}="e";
            (15,-25)*+{P_{a'bb}}="f";
            (45,-25)*+{P_{a''bb}}="g";
            {\ar^{j^a_b} "a" ; "b"};
            {\ar^{\int^{b}_{(f'f)bb}} "b" ; "c"};
            {\ar_{j^a_b} "a" ; "d"};
            {\ar_{\int^{b} P_{fbb}} "d" ; "e"};
            {\ar_{\int^{b} P_{f'bb}} "e" ; "c"};
            {\ar^{P_{(f'f)bb}} "a" ; "g"};
            {\ar|{P_{fbb}} "a" ; "f"};
            {\ar|{P_{f'bb}} "f" ; "g"};
            {\ar|{j^{a'}_b} "f" ; "e"};
            {\ar|{j^{a''}_b} "g" ; "c"};
            {\ar@{=>}^{j^{f'f}_b} (45,-10.5) ; (45,-14.5)};
            {\ar@{=>}^{j^f_b} (10,-32) ; (10,-36)};
            {\ar@{=>}^{j^{f'}_b} (37.5,-35.5) ; (37.5,-39.5)};
            (20,-17.5)*+{\cong};
        \endxy
    \]
 The unlabeled isomorphism is the composite coherence cell for $P_{-bb}$ consisting of $P(1_{f'f},l_1,l_1)$ and $\phi^P_{f'bb,fbb}$. Similarly, the inverse of the invertible $2$-cell
    \[
        \xy
            (0,0)*+{\int^{b} P_{abb}}="a";
            (30,0)*+{\int^{b} P_{abb}}="b";
            {\ar@/^2pc/^{\int^{b} P_{abb}} "a" ; "b"};
            {\ar@/_2pc/_{id} "a" ; "b"};
            {\ar@{=>}^{\phi^{\int^{b} P_{-bb}}_a} (10,2) ; (10,-2)};
        \endxy
    \]
when whiskered by $j^{a}_b$, gives the invertible pasting diagram
    \[
        \xy
            (0,0)*+{P_{abb}}="a";
            (30,0)*+{\int^{b} P_{abb}}="b";
            (30,-25)*+{\int^{b} P_{abb}}="c";
            (0,-25)*+{\int^{b} P_{abb}}="d";
            (15,-12.5)*+{P_{abb}}="e";
            (20,-7)*+{\cong};
            {\ar@{=>}^{j^{id_a}_b} (7,-14.5) ; (7,-18.5)};
            {\ar^{j^a_b} "a" ; "b"};
            {\ar^{id} "b" ; "c"}:
            {\ar_{j^a_b} "a" ; "d"};
            {\ar_{\int^{b} P_{abb}} "d" ; "c"};
            {\ar|{P_{abb}} "a" ; "e"};
            {\ar|{j^a_b} "e" ; "c"};
        \endxy
    \]
where the unlabeled isomorphism is the composite coherence cell consisting of $l_{{j^a}_b}$, $\left(r_{j^a_b}\right)^{-1}$, and $\phi^{P_{-bb}}_a$.

\begin{lemma}\label{coyoneda}
Let $\m{A}$ be a bicategory. There is a pseudofunctor
\[
    I = \int^{a} -(a) \times \m{A}(-,a) \colon \bf{Bicat}(\m{A}^{op},\bf{Cat}) \rightarrow \bf{Bicat}(\m{A}^{op},\bf{Cat}).
\]
\end{lemma}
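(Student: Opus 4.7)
The plan is to construct $I$ piece-by-piece using the universal property of bicoends. The action on objects is given directly by Lemma~\ref{psfunctor}, while the actions on 1-cells, 2-cells, and the coherence data are all obtained via the universal properties EB1 and EB2, using the composition Lemma~\ref{comp1} to produce the requisite auxiliary extrapseudonatural transformations.

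On objects, for each pseudofunctor $F \colon \m{A}^{op} \to \bf{Cat}$ the assignment $(b,a,a') \mapsto F(a) \times \m{A}(b,a')$ is a pseudofunctor $\m{A}^{op} \times \m{A}^{op} \times \m{A} \to \bf{Cat}$, so Lemma~\ref{psfunctor} yields a pseudofunctor $IF \colon \m{A}^{op} \to \bf{Cat}$ whose value at $b$ is the bicoend $\int^{a} F(a) \times \m{A}(b,a)$, equipped with a universal extrapseudonatural transformation $i^{F,b} \colon F(-) \times \m{A}(b,-) \carrow IF(b)$.

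Given a pseudonatural transformation $\alpha \colon F \Rightarrow G$, whiskering produces a pseudonatural transformation $\alpha \times 1 \colon F(-) \times \m{A}(b,-) \Rightarrow G(-) \times \m{A}(b,-)$, and composing with $i^{G,b}$ via Lemma~\ref{comp1} yields an extrapseudonatural transformation $F(-) \times \m{A}(b,-) \carrow IG(b)$. Axiom EB1 on $IF(b)$ then induces a functor $(I\alpha)_b \colon IF(b) \to IG(b)$ together with invertible comparison cells. For each $f \colon b \to b'$ in $\m{A}$, the pseudonaturality 2-cell $(I\alpha)_f$ is obtained by applying the uniqueness half of EB2 to the two pasting diagrams expressing $(I\alpha)_{b'} \cdot IF(f)$ and $IG(f) \cdot (I\alpha)_b$ in terms of the universal cells, and the axioms PS1--PS3 for $I\alpha$ follow from the same uniqueness principle. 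A modification $\Gamma \colon \alpha \Rrightarrow \beta$ induces, at each $b$, a 2-cell family between $I\alpha$ and $I\beta$ satisfying the compatibility hypothesis of EB2 by virtue of the modification axiom, and EB2 then produces a unique modification $I\Gamma$.

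The coherence isomorphisms $\phi^I_{\beta,\alpha} \colon I\beta \cdot I\alpha \Rightarrow I(\beta\alpha)$ and $\phi^I_F \colon I(\mathrm{id}_F) \Rightarrow \mathrm{id}_{IF}$ arise componentwise from EB2 by exhibiting both sides as induced by extrapseudonatural transformations that agree up to canonical composites of the comparison cells from the 1-cell step; the pseudofunctor axioms for $I$ reduce, after whiskering with $i^{F,b}$, to equalities that hold by construction, so EB2 forces them. The main obstacle will be a purely bookkeeping one: at each step the candidate 2-cell family must be shown to satisfy the compatibility pasting equation of EB2 with the extranaturality cells $i^{F,b}_f$. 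In every case this reduces to naturality of coherence cells in $\bf{Cat}$ together with the pseudonaturality or modification axioms of the given data, so no substantially new argument beyond those in the composition lemmas of Section~\ref{sec:cod} is required.
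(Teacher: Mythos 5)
Your proposal is correct and follows essentially the same route as the paper's proof: the bicoends of $F \times \m{A}(-,-)$ exist by bicocompleteness of $\bf{Cat}$, the action on $1$-cells is induced by composing $\gamma \times 1$ with the universal cocone via Lemma \ref{comp1} and invoking EB1, and the action on $2$-cells and the pseudofunctor coherence data come from the uniqueness clause of EB2. The only difference is presentational: you construct $I(F)$ pointwise over $b$ and assemble it into a pseudofunctor using Lemma \ref{psfunctor}, whereas the paper works directly with the bicodescent object in the functor bicategory $\bf{Bicat}(\m{A}^{op},\bf{Cat})$; both treatments leave the verification of the composition constraints and the pseudofunctor axioms at the same level of detail.
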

\begin{proof}
Since $\bf{Cat}$ is bicocomplete the bicodescent object
    \[
        I(F) = \int^{a} F(a) \times \m{A}(-,a)
    \]
exists for each pseudofunctor $F \colon \m{A}^{op} \rightarrow \bf{Cat}$. Given a pseudonatural transformation $\gamma \colon F \Rightarrow G$, we can define another pseudonatural transformation
    \[
        \gamma \times 1_{\m{A}(-,-)} \colon F \times \m{A}(-,-) \Rightarrow G \times \m{A}(-,-).
    \]
Since $I(F)$ and $I(G)$ are bicodescent object then we also have extrapseudonatural transformations $i^F \colon F \times \m{A}(-,-) \carrow I(F)$, $i^G \colon G \times \m{A}(-,-) \carrow I(G)$, and so the composite of $i^G$ and $\gamma \times 1_{\m{A}(-,-)}$, in the manner of Lemma \ref{comp1}, induces a pseudonatural transformation $I(\gamma) \colon I(F) \Rightarrow I(G)$ via the universal property of $i^F$. This also means there are invertible modifications
    \[
        \xy
            (0,0)*+{Fa \times \m{A}(-,a)}="a";
            (25,0)*+{I(F)}="b";
            (0,-20)*+{Ga \times \m{A}(-,a)}="c";
            (25,-20)*+{I(G)}="d";
            {\ar@{=>}^>>>>>{i^F_a} "a" ; "b"};
            {\ar@{=>}^{I(\gamma)} "b" ; "d"};
            {\ar@{=>}_{\gamma_a \times 1_{\m{A(-,a)}}} "a" ; "c"};
            {\ar@{=>}_>>>>>{i^G_a} "c" ; "d"};
            {\ar@3{->}^{\Gamma_a} (12.5,-8) ; (12.5,-12)};
        \endxy
    \]
satisfying the pasting axiom EB1 of Definition \ref{def:epnatcod}.

The action of $I$ on $2$-cells is described as follows. If $\Sigma \colon \gamma \Rrightarrow \delta$ is a modification then for each $a \in \m{A}$ there is a natural transformation $\Sigma_a \colon \gamma_a \Rightarrow \delta_a$, giving rise to a modification $\Sigma_a \times 1 \colon \gamma_a \times 1 \Rightarrow \delta_a \times 1$. (Note that in the following diagram we switch the style of arrow.) The composite modification
    \[
        \xy
            (0,0)*+{Fa \times \m{A}(-,a)}="a";
            (50,0)*+{I(F)}="b";
            (0,-40)*+{I(F)}="c";
            (50,-40)*+{I(G)}="d";
            (25,-20)*+{Ga \times \m{A}(-,a)}="e";
            {\ar@/^/^{i^F_a} "a" ; "b"};
            {\ar^{I(\gamma)} "b" ; "d"};
            {\ar@/_/_{i^F_a} "a" ; "c"};
            {\ar_{I(\delta)} "c" ; "d"};
            {\ar@/^2pc/|{\gamma_a \times 1_{\m{A}(-,a)}} "a" ; "e"};
            {\ar@/_2pc/|{\delta_a \times 1_{\m{A}(-,a)}} "a" ; "e"};
            {\ar|{i^G_a} "e" ; "d"};
           {\ar@{=>}^{\Gamma_a} (37.5,-8) ; (37.5,-12)};
           {\ar@{=>}^{\Delta^{-1}_a} (12.5,-28) ; (12.5,-32)};
           {\ar@{=>}^{\Sigma_a \times 1} (13.5,-8) ; (10.5,-11)};
        \endxy
    \]
satisfies the requirements of axiom EB2, yielding a unique $2$-cell $I(\Delta) \colon I(\gamma) \Rrightarrow I(\delta)$. The action of $I$ on $2$-cells preserves the strict composition of modifications due to the uniqueness property inherent in the universal property. It remains to describe the data for the pseudofunctor on $1$-cell composition and check the appropriate axioms, however this clearly follows from similar arguments to the above.
\end{proof}

\section{Fubini for codescent objects}

This section makes use of the previous definitions and technical lemmas in order to prove a bicategorical analogue of the Fubini theorem for coends. Similar results have been established via a different approach \cite{Nun16}.

\begin{proposition}\label{fubini1}
Let $P \colon \m{A}^{op} \times \m{B}^{op} \times \m{A} \times \m{B} \rightarrow \m{C}$ be a pseudofunctor and assume that the bicodescent objects
    \[
        j^{a'a} \colon P(a',-,a,-) \overset{\cdot\cdot}{\Rightarrow} \int^{b} P(a',b,a,b)
    \]
and
    \[
        i \colon P \carrow \int^{a,b} P(a,b,a,b)
    \]
exist, where $(a',a) \in \m{A}^{op} \times \m{A}$. Then there is a $1$-cell
    \[
        \sigma \colon \int^{a} \int^{b} P(a,b,a,b) \rightarrow \int^{a,b} P(a,b,a,b)
    \]
if the left side exists.
\end{proposition}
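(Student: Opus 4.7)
The plan is to exhibit $\sigma$ as the $1$-cell induced by the universal property EB1 of the outer bicodescent, applied to an extrapseudonatural transformation $\tilde{i} \colon R \carrow \int^{a,b} P(a,b,a,b)$, where $R \colon \m{A}^{op} \times \m{A} \to \m{C}$ is the pseudofunctor defined by $R(a',a) = \int^{b} P(a',b,a,b)$. That $R$ is a pseudofunctor, and that the $j^{a'a}_\beta$ are pseudonatural in the pair $(a',a)$, follows from a double-variable extension of Lemma \ref{psfunctor} whose proof is essentially the same as the one given.

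First I would produce the $1$-cell components of $\tilde{i}$. For each $a \in \m{A}$, Lemma \ref{fixed} restricts $i$ to an extrapseudonatural transformation $P(a,-,a,-) \carrow \int^{a,b} P(a,b,a,b)$ in the $\m{B}$-variables alone. The universal property EB1 of $R(a,a)$ then yields a $1$-cell $\tilde{i}_a \colon R(a,a) \to \int^{a,b} P(a,b,a,b)$ equipped with comparison isomorphisms $\tilde{I}_{a,\beta} \colon \tilde{i}_a \cdot j^{aa}_\beta \cong i_{a,\beta}$ satisfying the usual pasting equation.

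Second, for each $f \colon a \to b$ in $\m{A}$ I would produce the required invertible $2$-cell $\tilde{i}_f \colon \tilde{i}_a \cdot R(f,1) \Rightarrow \tilde{i}_b \cdot R(1,f)$ via the $2$-dimensional universal property EB2 of $R(b,a)$. After whiskering by $j^{ba}_\beta$, the pseudonaturality cells for $R$ (from Lemma \ref{psfunctor}) together with the $\tilde{I}$ isomorphisms reduce both sides to $i_{a,\beta} \cdot P(f,1,1,1)$ and $i_{b,\beta} \cdot P(1,1,f,1)$ respectively, and the extrapseudonaturality $2$-cell of $i$ at the morphism $(f, 1_\beta) \colon (a,\beta) \to (b,\beta)$ in $\m{A} \times \m{B}$ mediates between them. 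The compatibility condition required by EB2 for this family to descend reduces, modulo coherence, to the axioms for $i$ in its $\m{A}$-variable. The axioms EP1, EP3, EP5, EP6 for $\tilde{i}$ then follow from the uniqueness clause of EB2, since after whiskering each equation reduces to the corresponding axiom for $i$.

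Applying EB1 to $\tilde{i}$ finally yields the desired $\sigma \colon \int^{a} \int^{b} P(a,b,a,b) \to \int^{a,b} P(a,b,a,b)$. The main obstacle is the extrapseudonaturality verification for $\tilde{i}$ in the second step: although each instance collapses to a known axiom for $i$ by EB2, the intermediate pasting diagrams require careful bookkeeping of the coherence cells contributed by $P$, by the two bicodescent structures, and by the pseudonaturality of $j^{a'a}_\beta$ in $(a',a)$ supplied by the double-variable analogue of Lemma \ref{psfunctor}.
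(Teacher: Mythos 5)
Your proposal is correct and follows essentially the same route as the paper: you restrict $i$ via Lemma \ref{fixed} to get the $1$-cells $\tilde{i}_a$ (the paper's $\phi^a$) from the universal property of each $\int^b P(a,b,a,b)$, induce the $2$-cells $\tilde{i}_f$ (the paper's $\phi^f$) from the $2$-dimensional universal property of $\int^b P(a',b,a,b)$ using the same pasting of the comparison isomorphisms, the pseudonaturality cells of $j$, and the extrapseudonaturality cell of $i$ in the $\m{A}$-variable, verify extrapseudonaturality by the uniqueness clause, and then apply the universal property of the iterated bicodescent object to obtain $\sigma$. No substantive differences.
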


\begin{proof}
 Suppose that the bicodescent object $k \colon \int^{b} P_{-b-b} \carrow \int^{a} \int^{b} P_{abab}$ exists. By Lemma \ref{fixed}, fixing $a \in \m{A}$ results in an extrapseudonatural transformation $i_{a-} \colon P_{a-a-} \carrow \int^{a,b} P_{abab}$ yielding a family of $1$-cells $\phi^{a} \colon \int^{b} P_{abab} \rightarrow \int^{a,b} P_{abab}$ along with corresponding families of invertible $2$-cells
    \[
        \xy
            (0,0)*+{P_{abab}}="a";
            (20,-10)*+{\int^{a,b} P_{abab}}="b";
            (20,10)*+{\int^{b} P_{abab}}="c";
            {\ar_{i_{ab}} "a" ; "b"};
            {\ar^{j^{aa}_b} "a" ; "c"};
            {\ar^{\phi^{a}} "c" ; "b"};
            {\ar@{=>}^{\Phi^a_b} (10,2) ; (10,-2)};
        \endxy
    \]
in $\m{C}$, satisfying the usual axioms. To induce $\sigma$ as in the statement of the theorem we now need find invertible $2$-cells
    \[
        \xy
            (0,0)*+{\int^{b} P_{a'bab}}="a";
            (25,0)*+{\int^{b} P_{abab}}="b";
            (0,-20)*+{\int^{b} P_{a'ba'b}}="c";
            (25,-20)*+{\int^{a,b} P_{abab}}="d";
            {\ar^{\int^{b} P_{fbab}} "a" ; "b"};
            {\ar^{\phi^{a}} "b" ; "d"};
            {\ar_{\int^{b} P_{a'bfb}} "a" ; "c"};
            {\ar_{\phi^{a'}} "c" ; "d"};
            {\ar@{=>}^{\phi^{f}} (12.5,-8) ; (12.5,-12)};
        \endxy
    \]
and show that there is an extrapseudonatural transformation $\phi^{-} \colon \int^{b} P_{-b-b} \carrow \int^{a,b} P_{abab}$.

To find the $\phi^f$ we will use the $2$-dimensional universal properties of the bicodescent objects $\int^{b} P_{a'bab}$. For each $b \in \m{B}$ we have an invertible $2$-cell
    \[
        \xy
            (0,0)*+{P_{a'bab}}="a";
            (25,20)*+{\int^{b} P_{a'bab}}="b";
            (50,20)*+{\int^{b} P_{abab}}="c";
            (75,0)*+{\int^{a,b} P_{abab}}="d";
            (25,-20)*+{\int^{b} P_{a'bab}}="e";
            (50,-20)*+{\int^{b} P_{a'ba'b}}="f";
            (37.5,7.5)*+{P_{abab}}="g";
            (37.5,-7.5)*+{P_{a'ba'b}}="h";
            {\ar^{j^{a'a}_b} "a" ; "b"};
            {\ar^{\int^{b} P_{fbab}} "b" ; "c"};
            {\ar^{\phi^{a}} "c" ; "d"};
            {\ar_{j^{a'a}_b} "a" ; "e"};
            {\ar_{\int^{b} P_{a'bfb}} "e" ; "f"};
            {\ar_{\phi^{a'}} "f" ; "d"};
            {\ar|{P_{fbab}} "a" ; "g"};
            {\ar|{P_{a'bfb}} "a" ; "h"};
            {\ar|{j^{aa}_b} "g" ; "c"};
            {\ar|{j^{a'a'}_b} "h" ; "f"};
            {\ar|{i_{ab}} "g" ; "d"};
            {\ar|{i_{a'b}} "h" ; "d"};
            {\ar@{=>}^{\left(j^{fa}_b\right)^{-1}} (25,15) ; (25,11)};
            {\ar@{=>}^{j^{a'f}_b} (25,-11) ; (25,-15)};
            {\ar@{=>}^{i_{fb}} (37.5,2) ; (37.5,-2)};
            {\ar@{=>}^{\Phi^a_b} (50,12) ; (50,8)};
            {\ar@{=>}^{\left(\Phi^{a'}_b\right)^{-1}} (50,-8) ; (50,-12)};
        \endxy
    \]
which satisfies the pasting conditions of axiom EB2. This is seen by pasting these $2$-cells with $j^{a'a}_g$ for some $g \colon b \rightarrow b'$ in $\m{B}$ and using properties of the $j_{b}^{fa}$, properties of the $\Phi^{a}_{b}$, axiom EP1, and axiom EP6, before using similar applications of these in the reverse order. Hence there is a unique $2$-cell $\phi^{f}$ as required, which satisfies appropriate pasting conditions, namely that the whiskering of $\phi^f$ by $j^{a'a}_b$ yields the composite $2$-cell displayed above.

We must now check that these $\phi^f$ satisfy the axioms of an extrapseudonatural transformation. As the codomain of the $1$-cells is an object of $\m{C}$ then some of the axioms again become redundant, namely EP2-4, and EP7. For EP1 we whisker each of the diagrams by $j^{a''a}_b$. On one side we get an instance of the above $2$-cell for $f'f$, while on the other we have to use the properties of the coherence cells of $\int^{b}P_{-b-b}$, in the manner described following Lemma \ref{fixed}. To equate the two pasting diagrams is a case of using the pseudonaturality of $j_b$, extrapseudonaturality of $i$, and instances of the above composite $2$-cell for both $f$ and $f'$. The uniqueness in the $2$-dimensional universal property of $\int^{b} P_{a''bab}$ is used to show that EP1 then holds. For EP5, most of the $2$-cells in $\phi^{id_a} \ast 1_{j^{aa}_b}$ are identities, leaving $\Phi^{a}_b$ to cancel with itself, before again using axiom EB2 to show the equality. Axiom EP6 is simple to check.

Since $\phi \colon \int^{b} P_{-b-b} \carrow \int^{a,b} P_{abab}$ is then an extrapseudonatural transformation there exists an invertible $2$-cell
    \[
        \xy
            (0,0)*+{\int^{b} P_{abab}}="a";
            (25,-10)*+{\int^{a,b} P_{abab}}="b";
            (25,10)*+{\int^{a} \int^{b} P_{abab}}="c";
            {\ar_{\phi^a} "a" ; "b"};
            {\ar^{k_a} "a" ; "c"};
            {\ar^{\sigma} "c" ; "b"};
            {\ar@{=>}^{\Sigma_a} (15,2) ; (15,-2)};
        \endxy
    \]
for each $a \in \m{A}$.
\end{proof}

\begin{lemma}\label{fublemma}
Let $P \colon \m{A}^{op} \times \m{B}^{op} \times \m{A} \times \m{B} \rightarrow \m{C}$ be a pseudofunctor and assume that for each fixed pair $(a',a) \in \m{A}^{op} \times \m{A}$ the bicodescent object
    \[
       j^{a'a} \colon P(a',-,a,-) \overset{\cdot\cdot}{\Rightarrow} \int^{b} P(a',b,a,b)
    \]
exists. Similarly suppose that the bicodescent object
    \[
        \theta \colon \int^{a,b} P(a,b,a,b) \rightarrow \int^{a} \int^{b} P(a,b,a,b)
    \]
exists. For each fixed $b \in \m{B}$, considering $j_b$ as a pseudonatural transformation $j \colon P_{-b-b} \Rightarrow \int^{b} P_{-b-b}$, the composite of $k$ and $j$, as in Lemma \ref{comp1} satisfies the compatibility condition of Lemma \ref{compatibility}.
\end{lemma}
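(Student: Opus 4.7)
The plan is to verify the pasting equation of Lemma \ref{compatibility} for the extrapseudonatural transformations
\[
\gamma_{a-} = k_a \cdot j^{aa} \colon P(a,-,a,-) \carrow \int^{a} \int^{b} P(a,b,a,b),
\]
obtained by post-whiskering $j^{aa}$ by the $1$-cell $k_a$, and
\[
\gamma_{-b} = k \ast j_b \colon P(-,b,-,b) \carrow \int^{a} \int^{b} P(a,b,a,b),
\]
obtained from the pseudonatural $j_b$ and the extrapseudonatural $k$ via Lemma \ref{comp1}. The pointwise agreement $(\gamma_{a-})_b = (\gamma_{-b})_a$ holds on the nose, since both $1$-cells equal $k_a \cdot j^{aa}_b$: this uses the identification between the $b$-component of the extrapseudonatural $j^{aa}$ and the $a$-component of the pseudonatural $j_b$ that is built into the construction of $j_b$ in Lemma \ref{psfunctor}.

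Next I would unfold each $2$-cell appearing in the compatibility diagram. For $f \colon a \to a'$ in $\m{A}$ and $g \colon b \to b'$ in $\m{B}$, the $2$-cell $\gamma_{fb}$ is the Lemma \ref{comp1} composite built from the extrapseudonatural cell $k_f$ of $k$ together with the pseudonaturality cell $(j_b)_f$, whereas $\gamma_{ag}$ is the whiskering $1_{k_a} \ast j^{aa}_g$ (and similarly for $\gamma_{a'g}$ and $\gamma_{fb'}$). The pseudonaturality cell $(j_b)_f$ is itself characterised by its defining property from Lemma \ref{psfunctor}: whiskering it by $j^{aa}_b$ yields an explicit pasting in terms of $j^{fa}_b$, $j^{a'a'}_b$-type cells, and coherence cells of $P$. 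Substituting this characterisation into both sides of the compatibility diagram re-expresses the equation as a statement involving only the extrapseudonatural $2$-cells of $j^{aa}$, $j^{a'a'}$, $k$, and $P$-coherence.

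At this point the equality is forced by three ingredients: axioms EP3 and EP4 for $k$ at $f$, which govern how $k_f$ interacts with the $\m{B}$-variable structure on either side; the extrapseudonatural axioms EP1 and EP6 for $j^{aa}$ and $j^{a'a'}$ in their $\m{B}$-variables, used to recombine adjacent extrapseudonatural components at $g$; and pseudofunctoriality of $P$, whose coherence cells cancel in matched pairs between the two sides. The main obstacle is purely combinatorial: the unfolded pasting diagrams are already large and the coherence bookkeeping is substantial. The cleanest organisation is to exploit the uniqueness clause EB2 for the bicodescent object $\int^{b} P(a',b,a',b)$: pre-whiskering the compatibility equation by $j^{a'a'}_b$ (for varying $b$) reduces it to an equality of pastings whose constituent $2$-cells are all named components of $P$, $k$, and $j^{a'a}$ together with bicategory coherence, whereupon agreement follows from the preceding axioms and standard naturality, and the uniqueness of induced $2$-cells then promotes the reduced identity back to the original one.
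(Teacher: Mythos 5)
Your setup is the right one and matches the paper's intent: take $\gamma_{a-}$ to be $j^{aa}$ whiskered by $k_a$ and $\gamma_{-b}$ to be the Lemma \ref{comp1} composite of $j_b$ and $k$, note that the components agree on the nose, and reduce the compatibility equation to the way the pseudonaturality cells $(j_b)_f$ of Lemma \ref{psfunctor} interact with the cone cells $j^{a'a}_g$ (this is exactly the ingredient the paper points to). However, your proposed mechanism for actually closing the computation does not work. The two pasting diagrams of Lemma \ref{compatibility} are $2$-cells between $1$-cells whose domain is the single object $P(a',b',a,b)$; they are not $2$-cells between $1$-cells out of any bicodescent object. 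So there is no universal cone to pre-whisker with --- ``pre-whiskering by $j^{a'a'}_b$'' does not even typecheck, since $j^{a'a'}_b$ has domain $P(a',b,a',b)$ --- and the uniqueness clause of EB2, which applies only to $2$-cells between $1$-cells out of $\int^{b}P(a',b,a',b)$, cannot be invoked to ``promote the reduced identity back.'' The universal properties enter only indirectly, through the EB1 pasting conditions that \emph{characterise} $(j_b)_{(f,1)}$ and $(j_b)_{(1,f)}$ against the cone cells (the displayed diagrams after Lemma \ref{psfunctor}); after substituting those, the equality must be verified by a direct pasting manipulation --- the paper organises this by showing one diagram composed with the reverse of the other is an identity.

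A second, smaller misstep: EP3 and EP4 for $k$ are vacuous here. As an extrapseudonatural transformation $k \colon \int^{b}P(-,b,-,b) \carrow \int^{a}\int^{b}P(a,b,a,b)$, the pseudonatural variable of $k$ (the ``$\m{A}$'' of Definition \ref{def:epnat}) is trivial and its domain carries no residual $\m{B}$-dependence, so there is no ``$\m{B}$-variable structure'' for $k_f$ to interact with and EP3/EP4 assert nothing. What actually moves $k_f$ from the $b$-slot to the $b'$-slot between the two sides is middle-four interchange, with all of the genuine content concentrated in the EB1 conditions for the induced $1$-cells $\int^{b}P_{fbab}$ and $\int^{b}P_{a'bfb}$ together with EP1/EP6 for the cones $j^{a'a}$ and the coherence of $P$. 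With the EB2 reduction removed and the role of $k$ corrected, your outline becomes essentially the paper's argument, but as written those two steps are gaps.
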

\begin{proof}
The proof relies on the equality of certain pasting diagrams, as in Lemma \ref{compatibility}. The easiest way to prove this equality is to show that one of the pasting diagrams acts as an inverse for the other. The steps required depend on how the $j_b$ interact with the $j^{a'a}$, as specified by Lemma \ref{psfunctor}.
\end{proof}

\begin{proposition}\label{fubini2}
Let $P \colon \m{A}^{op} \times \m{B}^{op} \times \m{A} \times \m{B} \rightarrow \m{C}$ be a pseudofunctor and assume that the bicodescent objects
    \[
       j^{a'a} \colon P(a',-,a,-) \overset{\cdot\cdot}{\Rightarrow} \int^{b} P(a',b,a,b)
    \]
and
    \[
        k \colon \int^{b} P(-,b,-,b) \carrow \int^{a} \int^{b} P(a,b,a,b)
    \]
exist, where $(a',a) \in \m{A}^{op} \times \m{A}$. Then there is a $1$-cell
    \[
        \theta \colon \int^{a,b} P(a,b,a,b) \rightarrow \int^{a} \int^{b} P(a,b,a,b)
    \]
if the left side exists.
\end{proposition}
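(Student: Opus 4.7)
The plan is to mirror the construction of $\sigma$ in Proposition \ref{fubini1}, only now going in the opposite direction: use the universal property EB1 of $\int^{a,b} P(a,b,a,b)$ to induce $\theta$ from a suitable extrapseudonatural transformation $\gamma \colon P \carrow \int^{a}\int^{b} P(a,b,a,b)$. The natural candidate is the ``composite'' of $j$ and $k$, and the main work is to assemble this into a single extrapseudonatural transformation in all four variables.

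First, fix $a \in \m{A}$. Since $k \colon \int^{b} P(-,b,-,b) \carrow \int^{a}\int^{b} P(a,b,a,b)$ is extrapseudonatural, its component $k_a$ is a $1$-cell $\int^{b} P(a,b,a,b) \rightarrow \int^{a}\int^{b} P(a,b,a,b)$. Whiskering $k_a$ onto the extrapseudonatural transformation $j^{aa} \colon P(a,-,a,-) \carrow \int^{b} P(a,b,a,b)$ yields an extrapseudonatural transformation
\[
    \gamma_{a-} := k_a \cdot j^{aa} \colon P(a,-,a,-) \carrow \int^{a}\int^{b} P(a,b,a,b),
\]
since pre-whiskering by a fixed $1$-cell preserves all the axioms EP1--EP7 in the obvious way. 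Dually, for fixed $b \in \m{B}$, the pseudonatural transformation $j_b \colon P(-,b,-,b) \Rightarrow \int^{b} P(-,b,-,b)$ (supplied by Lemma \ref{psfunctor}) composes with the extrapseudonatural $k$ via Lemma \ref{comp1} to give an extrapseudonatural transformation
\[
    \gamma_{-b} := k \circ j_b \colon P(-,b,-,b) \carrow \int^{a}\int^{b} P(a,b,a,b).
\]
By construction, both have $(a,b)$-component equal to $k_a \cdot j^{aa}_b$, so the equality $(\gamma_{a-})_b = (\gamma_{-b})_a$ required by Lemma \ref{compatibility} holds on the nose.

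The crux is then to verify the pasting-diagram compatibility condition in the hypothesis of Lemma \ref{compatibility}. This is precisely the content of Lemma \ref{fublemma}, which was already proven by exhibiting one pasting as the inverse of the other, using how the coherence cells of the pseudofunctor $\int^{b} P(-,b,-,b) \colon \m{A} \rightarrow \m{C}$ (described after Lemma \ref{psfunctor}) interact with the components $j^{fa}_b$ and $j^{a'f}_b$. With that compatibility in hand, Lemma \ref{compatibility} assembles the $\gamma_{a-}$ and $\gamma_{-b}$ into a single extrapseudonatural transformation
\[
    \gamma \colon P \carrow \int^{a}\int^{b} P(a,b,a,b).
\]

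Finally, applying axiom EB1 of Definition \ref{def:epnatcod} to the universal extrapseudonatural transformation $i \colon P \carrow \int^{a,b} P(a,b,a,b)$ produces the desired $1$-cell
\[
    \theta \colon \int^{a,b} P(a,b,a,b) \rightarrow \int^{a}\int^{b} P(a,b,a,b),
\]
together with isomorphisms $\theta \cdot i_{ab} \cong \gamma_{ab} = k_a \cdot j^{aa}_b$ satisfying the pasting condition from EB1. The only real obstacle in this argument is the compatibility hypothesis of Lemma \ref{compatibility}, which is why that verification was isolated as Lemma \ref{fublemma}; everything else is standard whiskering and an invocation of the universal property of the bicoend.
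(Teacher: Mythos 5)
Your proposal is correct and follows essentially the same route as the paper: extrapseudonaturality in $a$ via Lemma \ref{comp1} applied to $j_b$ and $k$, extrapseudonaturality in $b$ by whiskering $j^{aa}$ with $k_a$, the compatibility condition supplied by Lemma \ref{fublemma} feeding into Lemma \ref{compatibility}, and finally the universal property EB1 of $\int^{a,b}P$ to induce $\theta$. The only difference is that you spell out the intermediate data ($\gamma_{a-}$, $\gamma_{-b}$, and the agreement of their components) more explicitly than the paper does.
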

\begin{proof}
Suppose that the bicodescent object $i \colon P \carrow \int^{a,b} P_{abab}$ exists. By Lemma \ref{comp1}, the composite of $j$ and $k$ is extrapseudonatural in $a$. This composite is also extrapseudonatural in $b$, following from the extrapseudonaturality of $j$, simply by whiskering diagrams with the $1$-cells of $k$. By the previous lemma, the composite of $j$ and $k$ is then extrapseudonatural in $(a,b)$, so there exists an invertible $2$-cell
    \[
        \xy
            (0,0)*+{P_{abab}}="a";
            (30,0)*+{\int^{a,b} P_{abab}}="b";
            (0,-20)*+{\int^{b} P_{abab}}="c";
            (30,-20)*+{\int^{a} \int^{b} P_{abab}}="d";
            {\ar^{i_{ab}} "a" ; "b"};
            {\ar_{j^{aa}_b} "a" ; "c"};
            {\ar_{k_a} "c" ; "d"};
            {\ar^{\theta} "b" ; "d"};
            {\ar@{=>}^{\Theta_{ab}} (13,-8) ; (13,-12)};
        \endxy
    \]
for each $(a,b) \in \m{A} \times \m{B}$.
\end{proof}
\begin{theorem}\label{fubini}
Under the conditions of Proposition \ref{fubini1} and Proposition \ref{fubini2} there is an adjoint equivalence
    \[
        \int^{a,b} P(a,b,a,b) \simeq \int^{a} \int^{b} P(a,b,a,b).
    \]
\end{theorem}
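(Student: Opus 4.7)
The plan is to exhibit the $1$-cells $\sigma$ and $\theta$ constructed in Propositions \ref{fubini1} and \ref{fubini2} as mutually quasi-inverse, and then invoke the standard result that any equivalence in a bicategory can be promoted to an adjoint equivalence (as cited in the notation and conventions). All invertibility data needed to construct the two isomorphisms $1 \cong \theta\sigma$ and $\sigma\theta \cong 1$ comes from the $2$-cells $\Sigma_a$, $\Phi^a_b$ (from Proposition \ref{fubini1}) and $\Theta_{ab}$ (from Proposition \ref{fubini2}); everything else is bookkeeping using the universal properties EB1 and EB2.

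First I would construct an invertible $2$-cell $\sigma\theta \Rightarrow 1_{\int^{a,b} P(a,b,a,b)}$. Whiskering either side with $i_{ab}$ gives a candidate family of isomorphisms, namely
$$
\sigma\theta \cdot i_{ab} \xrightarrow{1_\sigma \ast \Theta_{ab}} \sigma \cdot (k_a \cdot j^{aa}_b) \cong (\sigma \cdot k_a) \cdot j^{aa}_b \xrightarrow{\Sigma_a \ast 1} \phi^a \cdot j^{aa}_b \xrightarrow{\Phi^a_b} i_{ab}
$$
compared to $1_{i_{ab}}$. To apply EB2 for $\int^{a,b} P(a,b,a,b)$ one needs to verify the pasting compatibility with $i_f$ for $1$-cells $f$ in both $\m{A}$ and $\m{B}$. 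This reduces, by unpacking the defining pasting equation for $\Theta_{ab}$ (extrapseudonaturality of the composite $k \cdot j$ in each variable, together with Lemma \ref{fublemma}) and the extrapseudonaturality conditions on $\Phi^a_b$ and $\Sigma_a$, to a manipulation of coherence cells that eventually cancels. The resulting unique $2$-cell is automatically invertible because each of $\Theta_{ab}$, $\Sigma_a$ and $\Phi^a_b$ is.

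For the other composite $1 \Rightarrow \theta\sigma$ one has to use the universal properties in two stages, since $\int^{a}\int^{b} P(a,b,a,b)$ is an iterated bicodescent object. I would first produce invertible $2$-cells $\theta\sigma \cdot k_a \cdot j^{aa}_b \Rightarrow k_a \cdot j^{aa}_b$ by pasting $\Sigma_a$, $\Phi^a_b$ and $\Theta_{ab}$ in the analogous way, check the compatibility needed to apply EB2 for $\int^{b} P(a,b,a,b)$ at each fixed $a$, thereby inducing isomorphisms $\theta\sigma \cdot k_a \Rightarrow k_a$; then check a further compatibility (naturality in $a$, via the extrapseudonaturality $k_f$) to apply EB2 for $\int^{a}\int^{b} P(a,b,a,b)$, yielding a unique $2$-cell $\theta\sigma \Rightarrow 1$, again invertible.

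Having constructed an equivalence $(\sigma, \theta, 1 \cong \theta\sigma, \sigma\theta \cong 1)$ in $\m{C}$, the final step is the standard promotion to an adjoint equivalence in any bicategory by adjusting one of the two isomorphisms using the triangle identity (see \cite{Gur12}), which yields the stated adjoint equivalence. The main obstacle is the verification of the EB2 compatibility conditions in the two stages above: although each individual $2$-cell $\Sigma_a$, $\Phi^a_b$, $\Theta_{ab}$ was designed to satisfy such a pasting equation against the appropriate extrapseudonatural $2$-cells ($i_f$, $j^{fa}_b$, $j^{a'f}_b$, $k_f$), stitching together the three layers demands a careful bookkeeping argument analogous to, but longer than, the verifications already carried out in the proofs of Propositions \ref{fubini1} and \ref{fubini2}.
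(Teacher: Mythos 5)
Your proposal matches the paper's proof in all essentials: both isomorphisms are induced exactly as you describe, the counit $\sigma\theta \cong 1$ by whiskering with $i_{ab}$ and applying EB2 for $\int^{a,b}P(a,b,a,b)$ to the paste of $\Theta_{ab}$, $\Sigma_a$ and $\Phi^a_b$, and the other composite in two stages (first EB2 for each $\int^{b}P(a,b,a,b)$ to get cells $\theta\cdot\phi^a \Rightarrow k_a$, then EB2 for the iterated object after checking compatibility with $k_f$). The only divergence is the last step: where you invoke the general promotion of an equivalence to an adjoint equivalence from \cite{Gur12}, the paper instead verifies the triangle identities directly for the $2$-cells it has constructed, by whiskering with $i_{ab}$ and appealing to the uniqueness clause of EB2 --- both routes are valid.
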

\begin{proof}
The equivalence is provided by the $1$-cells and invertible $2$-cells induced in the previous theorems. We require isomorphisms
    \[
        \sigma \cdot \theta \cong id, \theta \cdot \sigma \cong id
    \]
before showing that they satisfy appropriate axioms. For the first isomorphism, we can show that the invertible $2$-cells
    \[
        \xy
            (0,0)*+{P_{abab}}="a";
            (30,20)*+{\int^{a,b} P_{abab}}="b";
            (60,20)*+{\int^{a}\int^{b} P_{abab}}="c";
            (20,-20)*+{\int^{a,b} P_{abab}}="d";
            (60,-20)*+{\int^{a,b} P_{abab}}="e";
            (40,0)*+{\int^{b} P_{abab}}="f";
            {\ar@/^1pc/^{i_{ab}} "a" ; "b"};
            {\ar^{\theta} "b" ; "c"};
            {\ar@/^1pc/^{\sigma} "c" ; "e"};
            {\ar@/_1pc/_{i_{ab}} "a" ; "d"};
            {\ar_{id} "d" ; "e"};
            {\ar|{j^{aa}_b} "a" ; "f"};
            {\ar|{k_a} "f" ; "c"};
            {\ar@/^1pc/|{\phi^a} "f" ; "e"};
            {\ar|{i_{ab}} "a" ; "e"};
            {\ar@{=>}^{\Theta_{ab}} (25,12) ; (25,8)};
            {\ar@{=>}^{\Sigma_a} (55,2) ; (55,-2)};
            {\ar@{=>}^{\Phi^a_b} (40,-5) ; (40,-9)};
            {\ar@{=>}^{l_{i_{ab}}^{-1}} (16,-11) ; (16,-15)};
        \endxy
    \]
satisfy the requirements of axiom EB2, giving a unique invertible $2$-cell $\kappa \colon \sigma \cdot \theta \Rightarrow id$ such that $\kappa \ast 1_{i_{ab}}$ is the pasting diagram above.

The second isomorphism requires two steps. The first uses invertible $2$-cells
    \[
        \xy
            (0,0)*+{P_{abab}}="a";
            (25,20)*+{\int^{b} P_{abab}}="b";
            (50,20)*+{\int^{a,b} P_{abab}}="c";
            (75,0)*+{\int^{a,b} P_{abab}}="d";
            (25,-20)*+{\int^{b} P_{abab}}="e";
            (50,-20)*+{\int^{a,b} P_{abab}}="f";
            {\ar^{j^{aa}_b} "a" ; "b"};
            {\ar^{\phi^a} "b" ; "c"};
            {\ar^{\theta} "c" ; "d"};
            {\ar_{j^{aa}_b} "a" ; "e"};
            {\ar_{k_a} "e" ; "f"};
            {\ar_{id} "f" ; "d"};
            {\ar@/^1pc/|{k_a} "e" ; "d"};
            {\ar@/_1pc/|{i_{ab}} "a" ; "c"};
            {\ar@{=>}^{\Phi^a_b} (25,14) ; (25,10)};
            {\ar@{=>}^{l_{i_{k_a}}^{-1}} (50,-10) ; (50,-14)};
            {\ar@{=>}^{\Theta_{ab}} (37.5,2) ; (37.5,-2)};
        \endxy
    \]
satisfying the requirements of axiom EB2 to give unique invertible $2$-cells $\Omega_a \colon \theta \cdot \phi^a \Rightarrow id \cdot k_a$ such that $\Omega_a \ast 1_{j^{aa}_b}$ is the pasting diagram above. The second step uses invertible $2$-cells
    \[
        \xy
            (5,5)*+{\int^{b} P_{abab}}="a";
            (30,20)*+{\int^{a}\int^{b} P_{abab}}="b";
            (60,20)*+{\int^{a,b} P_{abab}}="c";
            (20,-10)*+{\int^{a}\int^{b} P_{abab}}="d";
            (60,-10)*+{\int^{a}\int^{b} P_{abab}}="e";
            {\ar^{k_a} "a" ; "b"};
            {\ar^{\sigma} "b" ; "c"};
            {\ar@/^1pc/^{\theta} "c" ; "e"};
            {\ar_{k_a} "a" ; "d"};
            {\ar_{id} "d" ; "e"};
            {\ar@/_2pc/|{\phi^a} "a" ; "c"};
            {\ar@{=>}^{\Sigma_a} (30,12) ; (30,8)};
            {\ar@{=>}^{\Omega_a} (40,2) ; (40,-2)};
        \endxy
    \]
which again satisfy the requirements of axiom EB2, in order to give unique invertible $2$-cells $\lambda \colon \theta \cdot \sigma \Rightarrow id$ such that $\lambda \ast 1_{k_a}$ is the pasting diagram above. To apply EB2 in this instance requires that, for some $f \colon a \rightarrow a'$, the pasting of the two instances of the previous diagram, for $a$ and $a'$, with $k_f$ are equal. We show that they are equal by using the universal property of $\int^{b} P_{abab}$, whiskering the diagrams with $j^{a'a}_b$ gives an equality of pasting diagrams and by uniqueness the original diagrams are equal.

Checking that this is then an adjoint equivalence relies again on the axiom EB2. The check here is somewhat simpler than the previous calculations. For each pair $(a,b) \in \m{A} \times \m{B}$ we have invertible $2$-cells
    \[
        \xy
            (0,0)*+{P_{abab}}="a";
            (30,0)*+{\int^{a,b} P_{abab}}="b";
            (90,0)*+{\int^{a} \int^{b} P_{abab}}="c";
            (60,10)*+{\int^{a} \int^{b} P_{abab}}="d";
            (60,-10)*+{\int^{a,b} P_{abab}}="e";
            {\ar^{i_{ab}} "a" ; "b"};
            {\ar@/^6pc/^{\theta} "b" ; "c"};
            {\ar@/_6pc/_{\theta} "b" ; "c"};
            {\ar|{\theta} "b" ; "d"};
            {\ar|{id} "d" ; "c"};
            {\ar|{id} "b" ; "e"};
            {\ar|{\theta} "e" ; "c"};
            {\ar^{\sigma} "d" ; "e"};
            {\ar@{=>}^{l_{\theta}^{-1}} (60,19) ; (60,15)};
            {\ar@{=>}^{r_{\theta}} (60,-15) ; (60,-19)};
            {\ar@{=>}^{\kappa} (50,2) ; (50,-2)};
            {\ar@{=>}^{\lambda^{-1}} (70,2) ; (70,-2)};
        \endxy
    \]
which plainly satisfy the requirements of EB2. We also note that this whiskered pasting diagram is equal to the identity on $\theta \cdot i_{ab}$ and so by uniqueness we find that the composite $2$-cell, when not whiskered by $i_{ab}$, is the identity on $\theta$. A similar argument shows that the other triangle identity also holds, hence the equivalence is in fact an adjoint equivalence.
\end{proof}

\begin{corollary}
Let $P \colon \m{A}^{op} \times \m{B}^{op} \times \m{A} \times \m{B} \rightarrow \m{C}$ be a pseudofunctor and assume that  the bicodescent objects
    \[
       j^{a'a} \colon P(a',-,a,-) \overset{\cdot\cdot}{\Rightarrow} \int^{b} P(a',b,a,b)
    \]
and
    \[
        l^{b'b} \colon P(-,b',-,b) \carrow \int^{a} P(a,b',a,b)
    \]
exist, where $(a',a) \in \m{A}^{op} \times \m{A}$ and $(b',b) \in \m{B}^{op} \times \m{B}$. Then there is an adjoint equivalence
    \[
        \int^{a} \int^{b} P(a,b,a,b) \simeq \int^{b} \int^{a} P(a,b,a,b).
    \]
\end{corollary}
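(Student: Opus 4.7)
The plan is to show that, whenever either iterated bicodescent exists, it serves as a double bicodescent $\int^{a,b} P(a,b,a,b)$; essential uniqueness then yields the adjoint equivalence between the two iterated bicodescents.

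First, suppose the iterated bicodescent $\int^{a} \int^{b} P(a,b,a,b)$ exists. Then Lemma \ref{comp1} applied to the pseudonatural transformation $j$ and the extrapseudonatural transformation $k$, together with the compatibility checked in Lemma \ref{fublemma}, endows $\int^{a} \int^{b} P$ with an extrapseudonatural transformation $P \carrow \int^{a} \int^{b} P$ in both pairs of variables jointly. The arguments in the proof of Theorem \ref{fubini} (specifically those establishing the universal properties EB1 and EB2 for $\theta$) then verify that this transformation satisfies the four-variable analogue of the universal property of Definition \ref{def:epnatcod}, so $\int^{a} \int^{b} P$ is a double bicodescent of $P$. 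In particular, Theorem \ref{fubini} itself applies and provides an adjoint equivalence $\int^{a,b} P \simeq \int^{a} \int^{b} P$.

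Symmetrically, precomposing $P$ with the evident swap equivalence
\[
\tau \colon \m{B}^{op} \times \m{A}^{op} \times \m{B} \times \m{A} \xrightarrow{\sim} \m{A}^{op} \times \m{B}^{op} \times \m{A} \times \m{B}
\]
and repeating the argument with the inner bicodescents $\int^{a} P(a,b',a,b)$ in place of the $\int^{b} P(a',b,a,b)$, I would obtain that $\int^{b} \int^{a} P$ is also a double bicodescent of $P$, and hence another adjoint equivalence $\int^{a,b} P \simeq \int^{b} \int^{a} P$. Composing the first adjoint equivalence with the pseudoinverse of the second, and using that adjoint equivalences are closed under composition and pseudoinversion, yields the desired adjoint equivalence between the two iterated bicodescents.

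The main obstacle is verifying the symmetry under $\tau$: that extrapseudonaturality of a transformation out of $P$ in the variable pairs $(a',a,b',b)$ is the same data as extrapseudonaturality in $(b',b,a',a)$, and that the double bicodescent object is invariant (up to adjoint equivalence) under this reordering. This should reduce to bookkeeping of the axioms EP1--EP7, which are manifestly symmetric in the two independent pairs of contracted variables; still, this is the least automatic step and is where the care will lie, especially in aligning the composites of $2$-cells arising from the two applications of Theorem \ref{fubini} so that the mediating object $\int^{a,b} P$ genuinely serves as a common middle term.
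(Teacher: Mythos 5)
Your overall strategy is the right one and matches the paper's evident intent: the corollary is meant to follow by applying Theorem \ref{fubini} twice, once for each order of iteration, and composing the two adjoint equivalences through the common middle term $\int^{a,b}P(a,b,a,b)$. Your observation that the symmetry under the swap $\tau$ is essentially bookkeeping is also correct --- that is not where the difficulty lies. The step that does not work as written is the claim that ``the arguments in the proof of Theorem \ref{fubini} (specifically those establishing the universal properties EB1 and EB2 for $\theta$)'' show that $\int^{a}\int^{b}P$ is a double bicodescent object of $P$. The proof of Theorem \ref{fubini} presupposes that $\int^{a,b}P$ exists (it is a hypothesis of Propositions \ref{fubini1} and \ref{fubini2}) and only shows that the comparison $1$-cells $\sigma$ and $\theta$ between the two \emph{given} objects are mutually quasi-inverse; it nowhere verifies EB1 or EB2 for the cocone $k\cdot j\colon P \carrow \int^{a}\int^{b}P$ against an arbitrary target.

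To make your argument go through you must rerun the argument of Proposition \ref{fubini1} with an arbitrary extrapseudonatural transformation $m\colon P \carrow X$ in place of $i\colon P\carrow \int^{a,b}P_{abab}$: fix $a$ to obtain, via Lemma \ref{fixed} and the universal property of the inner bicodescent, induced $1$-cells $\int^{b}P_{abab}\rightarrow X$; assemble these into an extrapseudonatural transformation $\int^{b}P_{-b-b}\carrow X$; and then induce the required $1$-cell out of $\int^{a}\int^{b}P_{abab}$. A separate verification of the $2$-dimensional property EB2 for the iterated object is also needed. This is a genuine, if routine, piece of work that your proposal gestures at rather than performs. Once it is done, your route (iterated equals double, then essential uniqueness, then compose the two resulting adjoint equivalences) is sound, and it has the merit of not requiring the existence of $\int^{a,b}P$ as a separate hypothesis --- a hypothesis the corollary as stated omits but which a naive double application of Theorem \ref{fubini} would require.
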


It is simple to see that an object satisfying the axioms of a codescent object \cite{Lac02}, \cite{Str87} also satisfies those of a bicodescent object. A bicodescent object only requires existence of an induced $1$-cell in the $1$-dimensional property whereas a codescent object requires this to also be unique. Hence our Fubini result applies to codescent objects as well as bicodescent objects.

\bibliographystyle{apalike}
\bibliography{epnatbib}

\end{document}